\theoremstyle{plain}
\newtheorem{theorem}{Theorem}[section]
\newtheorem{lemma}[theorem]{Lemma}
\newtheorem{proposition}[theorem]{Proposition}
\theoremstyle{definition}
\newtheorem{remark}[theorem]{Remark}
\newtheorem*{SP}{Spectral~Property} 
\definecolor{orange}{rgb}{0.995, 0.75, 0.35}
\definecolor{purple}{rgb}{0.7, 0.2, 0.5}
\definecolor{royalblue}{rgb}{0.2, 0.7, 0.8}
\definecolor{darkgreen}{rgb}{0.2,0.725,0.25} 
\newcommand{\norm}[1]{ \left|  #1 \right|}
\def\al{\alpha}
\def\de{\delta}
\def\ga{\gamma}
\def\veps{\varepsilon}
\def\vphi{\varphi}
\def\De{\Delta}
\def\Lam{\Lambda}
\def\Om{\Omega}
\def\dive{\mathrm{div}\,}
\def\iy{\infty}
\def\pa{\partial}
\def\sH{\mathscr{H}}
\def\trace{\mathrm{tr}\;}
\newcommand{\inv}{^{-1}}
\newcommand{\nd}{\noindent}
\newcommand{\R}{\mathbb{R}}
\newcommand{\C}{\mathbb{C}}
\def\sideremark#1{\ifvmode\leavevmode\fi\vadjust{\vbox to0pt{\vss% the remark
 \hbox to 0pt{\hskip\hsize\hskip1em%                          will appear only
\vbox{\hsize2cm\tiny\raggedright\pretolerance10000%          on the side
 \noindent #1\hfill}\hss}\vbox to8pt{\vfil}\vss}}}%
\begin{document}
\title[Blowup rate for rotational NLS]{Blowup rate for mass critical rotational nonlinear Schr\"odinger equations}
\author{Nyla Basharat}
\address[Nyla Basharat]{%Department of Population Health Sciences 
DPHS\\
Augusta University\\
Augusta, GA 30912}
\email{nbasharat@augusta.edu}
\author{Yi Hu}
\address[Yi Hu]{Department of Mathematical Sciences \\
Georgia Southern University \\
Statesboro, GA 30460}
\email{yihu@georgiasouthern.edu}
\author{Shijun Zheng}
\address[Shijun Zheng]{DMS\\%Department of Mathematical Sciences \\
Georgia Southern University \\
Statesboro, GA 30460}
\email{szheng@georgiasouthern.edu}

\date{\today}
\subjclass[2010]{Primary  35Q55, 35B44. Secondary 35P30}%MSC 2010   35Q51 soliton like eq
\keywords{blowup rate, harmonic potential, angular momentum}
% \thanks{This work was partially supported by}

\begin{abstract}
We consider the blowup rate for blowup  solutions to $L^2$-critical, focusing NLS %nonlinear Schr\"odinger equations 
with a harmonic potential and a rotation term.
Under a suitable spectral condition we  prove that there holds the ``$\log$-$\log$ law'' when the initial data is slightly above the ground state. %energy hypotheses
We also construct  minimal mass blowup solutions near the ground state level with distinct blowup rates. 
\end{abstract}

\maketitle

%\tableofcontents
% \setcounter{equation}{0}

\section{Introduction}
 Consider the focusing nonlinear Schr\"odinger equation (NLS) with an angular momentum term on $\mathbb{R}^{1+n}$:
	\begin{align}\label{eq:nls-u}
	\begin{cases}
	i u_t = - \Delta u + V u - |u|^{p-1} u + i A \cdot \nabla u \\
	u(0, x)=u_0\in \sH^1,
	\end{cases}
	\end{align}
where $u = u(t, x)$: $\mathbb{R} \times \mathbb{R}^n\to\C$ denotes the wave function, 
$V(x)= \gamma^2 |x|^2$, $\gamma>0$ is a trapping harmonic potential that confines the movement of particles,  
and $A(x) = M x$, 
with $M=-M^T$ being an $n \times n$ real-valued skew-symmetric matrix.  %In particle physics, %in a ultra-cold superfluid,  
The linear hamiltonian $H_{A, V} := -\Delta + V + i A \cdot \nabla$ is essentially self-adjoint in $L^2$,
whose eigenvalues are associated to the Landau levels as quantum numbers. 
The angular momentum operator $L_A u:=i A\cdot\nabla u$  generates the rotation $e^{tA\cdot \nabla }u=u(e^{tM}x)$ in $\mathbb{R}^n$. 
The space $\sH^1=\sH^{1,2}$ denotes the weighted Sobolev space given by: for $r\in (1,\iy)$,
	\begin{align*}	\mathscr{H}^{1,r}(\mathbb{R}^n)
	:= \left\{ f \in L^r:  \nabla f,\,  x f \in L^r\right\},
	\end{align*}
which is endowed with the norm $\norm{f}_{\sH^{1,r}}=\norm{\nabla f}_r+\norm{x f}_r+\norm{ f}_r$, here  
$\norm{\cdot}_r:=\norm{\cdot}_{L^r}$ denoting the $L^r$-norm. 
%When $r=2$, we abbreviate  $\mathscr{H}^1 := \mathscr{H}^{s,2}$

When $n =3$, equation \eqref{eq:nls-u} is also known as  Gross-Pitaevskii equation which models rotating Bose-Einstein condensation (BEC) with attractive particle interactions in a dilute gaseous ultra-cold superfluid.  % condensation can be created by a stirring potential is an important system in atomic and computational physics
The operator $L_A$ are usually denoted by $- \Omega \cdot L $,
where $\Omega=(\Om_1,\Om_2,\Om_3) \in \mathbb{R}^3$ is a given angular velocity vector and $L := -i x \wedge \nabla$, in which case
$M= \begin{pmatrix} 0& -\Om_3& \Om_2\\
\Om_3&0& -\Om_1\\   
-\Om_2&\Om_1&0
\end{pmatrix}$.  %quantum mechanical angular momentum 

Such a system describing rotating particles in a %anisotropic 
harmonic trap has acquired significance in connection with optics and atomic physics 
in theoretical and experimental physics \cite{Gross61,MAHHWC99,ReZaStri01,%SowBi04,
BaoWMar05,Af06,BaoCai15}. 
Meanwhile, it demands rigorous mathematical analysis on the evolution and dynamics of the quantized flow. 
%The mathematical model \eqref{eq:nls-u} is an NLS with an angular momentum  term
For $p\in (1, 1+4/(n-2))$, the local in time existence and uniqueness of (\ref{eq:nls-u}) has been established in  \cite{HaoHsiaoLi1,HaoHsiaoLi2,AnMaSpar,BaHaHuZheng},
see also \cite{CazE88,De,Zheng} for the treatment in a general magnetic setting.  
The purpose of this article is to study how the rotation  affects the  wave collapse as well as energy concentration under a  trapping potential.
%change the asymptotic behavior  of the wave functions of the rotating condensation
In particular, we will address the blowup rate for the blowup solution of the $L^2$-critical focusing equation in \eqref{eq:nls-u} where $p=1+4/n$. 

Let $H^1=\{u\in L^2:\nabla u\in L^2\}$ be the usual Sobolev space. Recall that the standard NLS reads 
	\begin{align}\label{eq:nls-phi}
	\begin{cases}
	i \varphi_t = - \Delta \varphi - |\varphi|^{p-1} \varphi, \\
	\varphi(0, x) = \varphi_0\in H^1,
	\end{cases}
	\end{align}
and the well-posedness problem for $p\in (1,1+4/(n-2)]$ has been studied for a few decades and is quite well understood in the euclidean space.
Let $Q \in H^1(\mathbb{R}^n)$ be the unique   positive radial function that satisfies (\cite{Kw89,Wein83})
	\begin{align}\label{eq:ground-state}
	-Q = -\Delta Q - Q^{1+\frac{4}{n}}.
	\end{align}
When $p=1 + \frac{4}{n}$  and $\norm{\vphi_0}_2=\norm{Q}_2$,
Merle \cite{Mer} was able to determine the profile for all blowup solutions with minimal mass at the ground state level, which are obtained from pseudo-conformal transform. 
Hence
 all blowup solutions have blowup rate $(T-t)\inv$. 
In the mass critical and supercritical case $p\in [1+4/n,\iy)$, the wave collapse dynamics appears very subtle issue for $\norm{u_0}_2=\norm{Q}_2$.  
Within an arbitrarily small neighborhood of $Q$, there always exist $\vphi_0$ and $\psi_0$ such that the flow 
$\vphi_0\mapsto \vphi$ blowups in finite time, and, $\psi_0\mapsto \psi$ exists global in time and scatters as $t\to \iy$ in $H^1$. %\cite{CazLion82}

When $p=1+\frac{4}{n}$ and $\norm{\vphi_0}_2>\norm{Q}_2$, Bourgain and Wang \cite{BW98} 
constructed solutions of positive energy having blowup rate $(T-t)\inv$ in dimensions $n=1, 2$, 
which  was later shown unstable however.  
%There was proposed the log-log law in \cite{Fra85} \cite{LPSuSu88} 
Perelman \cite{Pere01}  gave the first rigorous demonstration of the existence and stability for the $\log$-$\log$ speed for generic blowup solutions in 1d. 
%yet structurally unstable
More recently, under certain spectral condition in the Spectral Property (Section \ref{s:spec-loglog}), %Hypothesis \ref{h:spectral_property},  
Merle and Rapha\"el \cite{MerRa1} %FiMeRa07} %MerRa2} 
proved the sharp blowup rate of the solutions for  \eqref{eq:nls-phi},
i.e., there exists a universal constant $\al^*$ such that 
if $\varphi_0\in B_{\al^*}$ with negative energy,
then  $\vphi(t,x)$ is a blowup solution to \eqref{eq:nls-phi} with maximal interval of existence $[0, T)$
 satisfying the $\log$-$\log$ blowup rate as $t\to T$
	\begin{align*}
         \vert \nabla \varphi(t) \vert_{2}
	\approx\sqrt{ \frac{\log |\log(T - t)|}{T-t} },
	\end{align*}
where
	\begin{align*}
	  B_{\alpha}
	:= \left\{ \phi \in H^1: \int |Q|^2 dx < \int |\phi|^2 dx < \int |Q|^2 dx + \alpha \right\},
	\end{align*}
see Theorem \ref{thm:log-log-phi}.
Such a blowup rate is also known to be stable in $H^1$.  %in the sense for each u_0      Applying the result of Merle and Rapha\"el and lens transform, Zhu and Zhang \cite{ZhuZhang} proved  such a blowup rate for the NLS with a harmonic potential

In the presence of a rotational term,  we will show how to prove   such a ``$\log$-$\log$ law''  
for the NLS  \eqref{eq:nls-u}.   
%for the mass critical case $p = 1 + \frac{4}{n}$, 
Our proof is based on a virial identity for \eqref{eq:nls-u}, 
 the $\mathcal{R}$-transform \eqref{eq:phi-to-u} that maps solutions of \eqref{eq:nls-phi} to solutions of \eqref{eq:nls-u}, and an application of the above result of Merle and Rapha\"el's. 
 This treatment is motivated by  \cite{ZhuZhang}, where the analogous result is obtained for the case $A=0$ and $V$ being a harmonic potential.  
%We use $H^{s,r}$ to denote the standard Sobolev space,
%i.e., for $r\in (1,\iy)$,	\begin{align*}	H^{s,r}(\mathbb{R}^n)
	%:= \left\{ f \in L^r(\mathbb{R}^n) \middle| \mathcal{F}^{-1} \left[ (1 + |\xi|^2)^\frac{s}{2} \mathcal{F}f \right] \in L^p(\mathbb{R}^n) \right\},
	%\end{align*} here $\mathcal{F}$ and $\mathcal{F}^{-1}$ denote the Fourier transform and inverse Fourier transform, resp.
Our main result is stated as follows. Let $\al^*$ be the above-mentioned universal constant. %as given in  Theorem \ref{thm:log-log-phi}. 
\begin{theorem}\label{thm:log-log-u}
Let $p = 1 + \frac{4}{n}$ in \eqref{eq:nls-u},  $1\le n \leq 5$. %and assume the spectral property \ref{h:spectral_property} holds. 
Suppose $u_0\in B_{\al^*}$ satifies
%that there exists a universal constant $\alpha^* > 0$ and\edz{ $``\vert'' \to ``:'' $?}
%	\begin{align}\label{eq:initial-mass}	u_0 \in B_{\alpha^*}
	%:= \left\{ \phi \in \mathscr{H}^1 \middle| \int |Q|^2 dx < \int |\phi|^2 dx \leq \int |Q|^2 dx + \alpha^* \right\}\end{align}
%with\edz{in \cite{FiMerRa}, it was $\{\int |Q|^2\le \int |\phi|^2 < \int |Q|^2  + \al^* \}$?}
	\begin{align}\label{eq:negative-initial-energy}
	\int |\nabla u_0|^2 -\frac{n}{n+2} \int |u_0|^{2+\frac{4}{n}}
	< 0.
	\end{align}
Then $u\in C([0, T); \mathscr{H}^1)$ is a blowup solution of  \eqref{eq:nls-u} in finite time $T < \infty$, 
with the $\log$-$\log$ blowup rate
	\begin{align*}
	\lim_{t \rightarrow T}
	\frac{\vert \nabla u(t) \vert_2}{\vert \nabla Q \vert_{2}}
	\sqrt{ \frac{T - t}{\log \left| \log (T - t)  \right|} }
	= \frac{1}{\sqrt{2 \pi}},
	\end{align*}
where $Q$ is the unique solution of \eqref{eq:ground-state}.
\end{theorem}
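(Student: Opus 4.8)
The plan is to deduce Theorem~\ref{thm:log-log-u} from the sharp $\log$-$\log$ law for the standard NLS \eqref{eq:nls-phi} via the explicit change of unknown $\mathcal R$ of \eqref{eq:phi-to-u}, which removes simultaneously the harmonic potential $V$ and the rotation term $iA\cdot\nabla$. First I would set $\varphi_0:=u_0$ and let $\varphi\in C([0,S);\sH^1)$ be the maximal solution of \eqref{eq:nls-phi} with datum $\varphi_0$; here the weighted regularity persists because $\sH^1$ is the natural virial ($\Sigma$-)space for the mass critical flow. Since $\mathcal R$ is normalized to reduce to the identity at $t=0$ (as the lens transform does, and $e^{0\cdot M}=I$), we have $u_0=\varphi_0$ as functions, so the standard energy
\begin{align*}
E(\varphi_0):=\tfrac12\norm{\nabla\varphi_0}_2^2-\tfrac{n}{2n+4}\norm{\varphi_0}_{2+\frac4n}^{2+\frac4n}
\end{align*}
is exactly one half of the left-hand side of \eqref{eq:negative-initial-energy}; thus hypothesis \eqref{eq:negative-initial-energy} says precisely $E(\varphi_0)<0$, while $\varphi_0=u_0\in B_{\al^*}$. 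Hence the sharp Merle--Rapha\"el theorem (Theorem~\ref{thm:log-log-phi}, which holds under the Spectral Property) applies: $S<\iy$ and
\begin{align*}
\lim_{s\to S}\frac{\norm{\nabla\varphi(s)}_2}{\norm{\nabla Q}_2}\sqrt{\frac{S-s}{\log\norm{\log(S-s)}}}=\frac1{\sqrt{2\pi}}.
\end{align*}

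Next I would record the two building blocks of $\mathcal R$. Because $V(x)=\gamma^2|x|^2$ is radial and $e^{tM}$ is orthogonal, the rotating-frame substitution $u(t,x):=w(t,e^{tM}x)$ carries any $\sH^1$ solution of the rotationless equation $iw_t=-\Delta w+Vw-|w|^{4/n}w$ to a solution of \eqref{eq:nls-u}: the chain rule produces the extra term $i(My)\cdot\nabla_y w$, which is exactly $iMx\cdot\nabla_x u$ after undoing the orthogonal change of variables, while $\Delta$, $|x|^2$ and pointwise moduli are unchanged under $x\mapsto e^{tM}x$. Composing this with the lens (pseudo-conformal) transform
\begin{align*}
w(t,x)=\lambda(t)^{n/2}e^{i\mu(t)|x|^2}\,\varphi\big(s(t),\lambda(t)x\big),\qquad \lambda(t)=\frac1{\cos 2\gamma t},\quad s(t)=\frac{\tan 2\gamma t}{2\gamma},\quad \mu(0)=0,
\end{align*}
which intertwines the rotationless harmonic NLS with \eqref{eq:nls-phi} precisely because $p=1+\frac4n$ is mass critical, yields $\mathcal R$ as in \eqref{eq:phi-to-u}. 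Note $s(0)=0$, $s'(t)=\lambda(t)^2$, and $s$ maps $[0,\pi/(4\gamma))$ increasingly onto $[0,\iy)$. Setting $u:=\mathcal R\varphi$, uniqueness for \eqref{eq:nls-u} in $\sH^1$ identifies $u$ with the solution with datum $u_0$; it belongs to $C([0,T);\sH^1)$ with $T:=s\inv(S)<\pi/(4\gamma)<\iy$, and since $\lambda(t)$ stays bounded as $t\to T$ while $\norm{\nabla\varphi(s(t))}_2\to\iy$, this $T$ is exactly the blowup time.

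It then remains to track the kinetic energy across $\mathcal R$ and to match the constant. Using $|e^{tM}x|=|x|$ and the substitution $y=\lambda(t)e^{tM}x$, which has unit Jacobian modulus, one finds
\begin{align*}
\norm{\nabla u(t)}_2^2=\lambda(t)^2\,\norm{\nabla\varphi(s(t))}_2^2+R(t),
\end{align*}
where $R(t)$ gathers two pieces controlled respectively by $\mu(t)^2\lambda(t)^{-2}\,\norm{\,|x|\varphi(s(t))}_2^2$ and by a smooth function of $t$ times $\mathrm{Im}\!\int\overline\varphi\,x\cdot\nabla\varphi\,\ud x$. The virial identity for \eqref{eq:nls-phi}, $\frac{d^2}{ds^2}\!\int|x|^2|\varphi|^2\,\ud x=16\,E(\varphi_0)$, shows that both $\norm{\,|x|\varphi(s)}_2$ and $\mathrm{Im}\!\int\overline\varphi\,x\cdot\nabla\varphi\,\ud x$ stay bounded on the finite interval $[0,S]$; since moreover $\lambda(t)$ and $\mu(t)$ are bounded near $T$, we get $R(t)=O(1)=o\big(\lambda(t)^2\norm{\nabla\varphi(s(t))}_2^2\big)$, hence $\norm{\nabla u(t)}_2\sim\lambda(t)\,\norm{\nabla\varphi(s(t))}_2$ as $t\to T$. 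Finally, Taylor expansion gives $S-s(t)=\lambda(T)^2(T-t)(1+o(1))$, so $\lambda(t)\sqrt{(T-t)/(S-s(t))}\to1$ and $\log\norm{\log(T-t)}=\log\norm{\log(S-s(t))}\,(1+o(1))$; combining these with the displayed limit for $\varphi$,
\begin{align*}
\frac{\norm{\nabla u(t)}_2}{\norm{\nabla Q}_2}\sqrt{\frac{T-t}{\log\norm{\log(T-t)}}}\ \sim\ \frac{\lambda(t)}{\lambda(T)}\cdot\frac{\norm{\nabla\varphi(s(t))}_2}{\norm{\nabla Q}_2}\sqrt{\frac{S-s(t)}{\log\norm{\log(S-s(t))}}}\ \longrightarrow\ \frac1{\sqrt{2\pi}},
\end{align*}
which is the claim; the exact cancellation of the Jacobian factor $\lambda(t)^2$ between the spatial rescaling and the time change is what forces the limiting constant to be $1/\sqrt{2\pi}$.

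The step I expect to demand the most care is verifying cleanly that $\mathcal R$ maps solutions of \eqref{eq:nls-phi} to solutions of \eqref{eq:nls-u} together with the right functional framework (datum in $H^1\cap\sH^1$, persistence of $\sH^1$ under both flows, and the normalization $u_0=\varphi_0$), since that computation closes only because $V$ is radial and $p$ is mass critical. After that, everything is either quoted (the sharp Merle--Rapha\"el theorem, the Spectral Property, local well-posedness of \eqref{eq:nls-u} in $\sH^1$ for $1\le n\le 5$) or routine asymptotics driven by the virial law.
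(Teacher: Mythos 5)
Your proposal is correct and follows essentially the same route as the paper: transfer the data via the $\mathcal{R}$-transform (with $u_0=\varphi_0$), invoke the Merle--Rapha\"el $\log$-$\log$ law for \eqref{eq:nls-phi}, control the extra quadratic-phase contribution to $\nabla u$ by the virial identity of Lemma \ref{lem:virial-identity}, and match the time variables through $\tan(2\gamma t)$ asymptotics so the $\cos(2\gamma T)$ factors cancel. The only cosmetic differences are the direction in which you apply the transform (building $u=\mathcal{R}\varphi$ and invoking uniqueness, versus the paper passing from $u$ to $\varphi=\mathcal{R}^{-1}u$) and bookkeeping $\|\nabla u\|_2^2$ with a remainder $R(t)$ instead of the paper's $I_1+I_2$ split; these are equivalent.
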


\section{Local wellposedness for \eqref{eq:nls-u}} 
For $\varphi_0 \in \mathscr{H}^1$, 
the local well-posedness of \eqref{eq:nls-u} was proved for $1 \leq p < 1 + \frac{4}{n-2}$,  
see e.g. in \cite{De, Zheng}.  %
  The proof for the local well-posedness relies  on local in time dispersive estimates for $U(t)=e^{-itH_{A,V}}$,  
the fundamental solution on $[0,\de)$ (for some small $\de>0$) constructed in \cite{Ya}. 
The vectorial function $A$  represents a magnetic potential that induces the Coriolis effect or centrifugal force for the spinor particles. 
Alternatively, 
this can also be done by means of the  explicit formula in \eqref{e:fund-U(t)} for $U(t)$ defined in $(0,\pi/2\ga)$. 
This formula is obtained from the so-called $\mathcal{R}$-transform in \eqref{eq:phi-to-u}, a type of pseudo-conformal transform.
 % $s=1$  from time-indepedent potentials (see e.g. \cite{De, Zheng})

\begin{proposition}\label{p:blup-altern-conserv} 
Let $1 + \frac{4}{n} \leq p < 1 + \frac{4}{n-2}$. Suppose $u_0 \in \mathscr{H}^1$.  %$r := p+1$,  and $q := \frac{4(p+1)}{n(p-1)}$.
\begin{enumerate}
\item[\textup{(a)}]\;\textup{[Blowup Alternative]}  %The following blowup alternative holds.
	\begin{enumerate}[label=\rm{(\alph*)}]
	\item[\textup{(i)}] If $1 \leq p < 1 + \frac{4}{n-2}$,  then there exists $T^* > 0$ such that \eqref{eq:nls-u} has a unique maximal solution $u \in C([0, T^*), \mathscr{H}^1)\cap L^q_{loc}([0, T^*), \mathscr{H}^{1,r})$, where $r = p+1$  and $q= \frac{4(p+1)}{n(p-1)}$.
\item[\textup{(ii)}] If $T^*$ is finite, then $\norm{\nabla u}_2\to \iy$  as $t\to T^*$ with a lower bound:
\begin{align*}
&\norm{\nabla u(t)}_2\ge \frac{C}{\sqrt{T^*-t}}.
\end{align*}
\end{enumerate}	
%	\item If $p = 1 + \frac{4}{n-2}$ ($n \geq 3$), then there exists $T = T(u_0) > 0$ such that \eqref{eq:nls-u} has a solution $u \in C([0, T), \mathscr{H}^1) \cap L^q([0, T), \mathscr{H}^{1,r})$.	Moreover,  	if in addition $\| u_0 \|_{\mathscr{H}^1}$ is sufficiently small,  then \eqref{eq:nls-u} has a unique global solution $u \in C(\mathbb{R}, \mathscr{H}^1) \cap L^q_{\rm{loc}}(\mathbb{R}, \mathscr{H}^{1,r})$.
	\item[\textup{(b)}]\;\textup{[Conservation Laws]} The following are conserved on the maximal lifespan $[0,T^*)$.
		\begin{enumerate}[label=\rm{(\roman*)}]
		\item[]\;\textup{(mass)}
		$\displaystyle M(u)=\int |u|^2$
	\item[]\;\textup{(energy)}
		$\displaystyle E(u)=\int \left( |\nabla u|^2 +V |u|^2 - \frac{2}{p+1} |u|^{p+1} + i \bar{u} A\cdot \nabla u \right)$
	\item[]\;\textup{(angular momentum)}
		$\displaystyle \ell_A(u)=\int i \bar{u} A\cdot \nabla u$.
		\end{enumerate}
\end{enumerate} 
\end{proposition}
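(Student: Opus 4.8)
The plan for part~(a)(i) is a contraction‐mapping argument on the Duhamel formula $u(t)=U(t)u_0+i\int_0^t U(t-s)\,|u|^{p-1}u(s)\,\ud s$ with $U(t)=e^{-itH_{A,V}}$, carried out in a Strichartz space built over $\sH^1$. Two ingredients are needed. First, local-in-time dispersive and Strichartz estimates for $U(t)$: from the explicit kernel \eqref{e:fund-U(t)} on $(0,\pi/2\ga)$ (equivalently from \cite{Ya}) one reads off $\norm{U(t)f}_{\iy}\lesssim|t|^{-n/2}\norm{f}_1$ for small $t$, hence $\norm{U(t)f}_{L^q_tL^r_x}\lesssim\norm{f}_2$ and its inhomogeneous analogue for every admissible pair $\tfrac2q+\tfrac nr=\tfrac n2$ on a short interval, in particular for $(q,r)=\big(\tfrac{4(p+1)}{n(p-1)},\,p+1\big)$. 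Second, a way to carry along the weighted structure $\nabla u,\,xu\in L^r$; since neither $\nabla$ nor multiplication by $x$ commutes with $H_{A,V}$, the efficient route is the $\mathcal R$-transform \eqref{eq:phi-to-u}: it is a bijection $H^1\to\sH^1$ on data and, on $(0,\pi/2\ga)$, intertwines the flow of \eqref{eq:nls-phi} with that of \eqref{eq:nls-u} up to the fixed unitary $u\mapsto u(e^{sM}\cdot)$, so the classical $L^2$- and $H^1$-level local well-posedness of \eqref{eq:nls-phi} for $1\le p<1+\tfrac4{n-2}$ \cite{CazE88,De,Zheng} transfers to \eqref{eq:nls-u}, yielding $u\in C([0,T^*),\sH^1)\cap L^q_{loc}([0,T^*),\sH^{1,r})$, $r=p+1$, together with uniqueness and the alternative $T^*<\iy\Rightarrow\norm{u(t)}_{\sH^1}\to\iy$. (Alternatively one works directly, treating the first-order commutators $[\nabla,H_{A,V}]$ and $[x,H_{A,V}]$ as perturbations on the short interval.)

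For part~(a)(ii) it remains to place the blowup inside $\norm{\nabla u}_2$ and to extract the rate. Again the $\mathcal R$-transform is convenient: on the interval where it is defined, a finite-time $\sH^1$-blowup of $u$ corresponds to a finite-time $H^1$-blowup of its partner $\vphi$ solving \eqref{eq:nls-phi}, for which $\norm{\nabla\vphi(s)}_2\to\iy$ with the classical lower bound $\norm{\nabla\vphi(s)}_2\gtrsim(S^*-s)^{-1/2}$ (mass-critical case); since the transform relates $\norm{\nabla u(t)}_2$ to $\norm{\nabla\vphi(g(t))}_2$ up to a factor bounded above and below near $T^*$ (the leftover terms, of type $\norm{x\vphi(s)}_2$, staying bounded — in fact $\to0$ by the virial identity for mass-critical blowup) and the time change $g$ is a diffeomorphism with $g'\ne0$, both $\norm{\nabla u(t)}_2\to\iy$ and $\norm{\nabla u(t)}_2\ge C(T^*-t)^{-1/2}$ follow. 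A more self-contained route: the $\sH^1$-alternative gives $\norm{\nabla u(t)}_2+\norm{xu(t)}_2\to\iy$, while differentiating $\int|x|^2|u|^2$ and inserting \eqref{eq:nls-u} — the potential and nonlinear contributions being purely imaginary and the rotation contribution $\mathrm{Re}\int|x|^2\bar u\,A\cdot\nabla u$ vanishing since $A=Mx$, $M=-M^T$ (so $x^TM^Tx=0$, $\mathrm{tr}\,M=0$) — leaves $\tfrac{d}{dt}\!\int|x|^2|u|^2=4\,\mathrm{Im}\int x\cdot\bar u\nabla u$; combined with the conservation laws of part~(b) and Gagliardo--Nirenberg (for $p=1+\tfrac4n$: $\norm{u}_{p+1}^{p+1}\le C\,M(u_0)^{2/n}\norm{\nabla u}_2^2$) this gives $\norm{xu(t)}_2\lesssim1+\norm{\nabla u(t)}_2$, so $\norm{\nabla u(t)}_2\to\iy$, and a rescaling-plus-maximality argument supplies the rate.

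Part~(b) is the standard energy method. Writing $H_{A,V}=-\De+V+L_A$ with $L_A=iA\cdot\nabla$, one has $E(u)=\la H_{A,V}u,u\ra-\tfrac{2}{p+1}\norm{u}_{p+1}^{p+1}$; pairing \eqref{eq:nls-u} with $\bar u$, resp.\ with $\bar u_t$, and using self-adjointness of $H_{A,V}$ gives $\tfrac{d}{dt}M(u)=0$ and $\tfrac{d}{dt}E(u)=2\,\mathrm{Re}\la u_t,iu_t\ra=0$; the rotation term rides along inside $H_{A,V}$, the relevant cancellation being $2\,\mathrm{Re}\int\bar u\,A\cdot\nabla u=\int A\cdot\nabla|u|^2=-\int(\mathrm{div}\,A)|u|^2=0$ since $\mathrm{div}\,A=\mathrm{tr}\,M=0$. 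For $\ell_A(u)=\la L_Au,u\ra$, self-adjointness of $L_A$ together with the commutation $[L_A,H_{A,V}]=0$ — valid because $-\De$ and the radial $V$ are rotation-invariant while $L_A$ generates rotations — reduces $\tfrac{d}{dt}\ell_A(u)$ to the imaginary part of $\la H_{A,V}u,L_Au\ra-\la|u|^{p-1}u,L_Au\ra$, both of which are real, the second because $\int|u|^{p-1}A\cdot\nabla|u|^2=\tfrac{2}{p+1}\int A\cdot\nabla|u|^{p+1}=0$; hence $\ell_A(u)$ is conserved. (Equivalently, this is Noether's theorem for the invariance of \eqref{eq:nls-u} under $u(t,x)\mapsto u(t,e^{sM}x)$.) All these identities are rigorous for $\sH^2$ (e.g.\ Schwartz) data and extend to $\sH^1$ data by approximation and the continuous dependence from part~(a).

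The step I expect to be the main obstacle is part~(a)(i). The harmonic trap $\ga^2|x|^2$ is an unbounded perturbation and the weighted-Sobolev structure is not invariant under $U(t)$, so a crude Duhamel/Strichartz scheme in $H^1$ does not close. What makes the argument go through is precisely the $\mathcal R$-transform, which on $(0,\pi/2\ga)$ at once furnishes the explicit propagator kernel — hence the local dispersive and Strichartz bounds — and conjugates the full nonlinear flow of \eqref{eq:nls-u} to that of \eqref{eq:nls-phi}, where everything is standard; making this intertwining precise on $\sH^{1,r}$ — tracking the images of $\nabla$, of multiplication by $x$, and of the rotation factor, and checking that the induced time change is a diffeomorphism onto its image in $(0,\pi/2\ga)$ — is the technical heart, and the construction in \cite{ZhuZhang} (the case $A=0$) indicates how to handle it.
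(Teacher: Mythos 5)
Your proposal is correct and takes essentially the paper's route: the paper establishes this proposition only by citation, relying on precisely the ingredients you assemble — local-in-time dispersive/Strichartz estimates for $U(t)=e^{-itH_{A,V}}$ (Yajima's construction or, equivalently, the explicit kernel \eqref{e:fund-U(t)} obtained from the $\mathcal{R}$-transform), transfer from the standard NLS \eqref{eq:nls-phi} via \eqref{eq:phi-to-u}, and standard energy/virial computations for the conservation laws and the blowup alternative. One cosmetic correction: since the $\mathcal{R}$-transform is the identity at $t=0$, what transfers is the weighted ($\sH^1$) local theory for \eqref{eq:nls-phi} rather than merely its $H^1$ theory — a classical point that does not affect the argument.
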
 
In the critical case $p=1+4/n$, from \cite{BaHaHuZheng} we know that  $\norm{Q}_2$ is the sharp threshold such that: 
\begin{enumerate}
\item[(a)] If $\norm{u_0}<\norm{Q}_2$, then \eqref{eq:nls-u} has a unique global in time solution.
\item[(b)] For all $c\ge \norm{Q}_2$, there exists $u_0$ with   $\norm{u_0}_2=c$ so that $u$ is a finite time blowup solution of \eqref{eq:nls-u}.
\end{enumerate}
As we mentioned in the introduction section, if $\norm{u_0}=\norm{Q}_2$, 
 from Merle's characterization for the blowup profile of \eqref{eq:nls-phi}, all such blowup solutions have the blowup rate 
 $\norm{\nabla u(t)}_2\approx \frac{C}{T^*-t}$ as $t\to T^*$, see Proposition  \ref{c:charact-mini-pc}.

\section{A spectral property and the $\log$-$\log$ law}\label{s:spec-loglog} 
To  show the blowup rate for initial data above the ground state $Q$ 
as stated in  Theorem \ref{thm:log-log-u},
 we need the following Spectral Property.
Let  $y$ denote the spatial variable in $\R^n$.
\begin{SP}\label{h:spectral_property}
Consider the two  Schr\"odinger operators
	\begin{align*}
	L_1
	:= - \Delta + \frac{2}{n} \left( \frac{4}{n} + 1 \right) Q^{\frac{4}{n} - 1} y \cdot \nabla Q, \qquad
	L_2
	:= - \Delta + \frac{2}{n} Q^{\frac{4}{n} - 1} y \cdot \nabla Q,
	\end{align*}
and the real-valued quadratic form for $\varepsilon = \varepsilon_1 + i \varepsilon_2 \in H^1$ 
	\begin{align*}
	H(\varepsilon, \varepsilon)
	%= H_1(\varepsilon_1, \varepsilon_1) + H_2(\varepsilon_2, \varepsilon_2)
	:= (L_1 \varepsilon_1, \varepsilon_1) + (L_2 \varepsilon_2, \varepsilon_2).
	\end{align*}
Let
	\begin{align*}
	Q_1
	:= \frac{n}{2} Q + y \cdot \nabla Q, \qquad
	Q_2
	:= \frac{n}{2} Q_1 + y \cdot \nabla Q_1.
	\end{align*}
Then there exists a universal constant $\delta_0 > 0$ such that for every $\varepsilon \in H^1$,
if
	\begin{align*}
	(\varepsilon_1, Q)
	= (\varepsilon_1, Q_1)
	= (\varepsilon_1, y_j Q)_{1 \leq j \leq n}
	= (\varepsilon_2, Q_1)
	= (\varepsilon_2, Q_2)
	= (\varepsilon_2, \partial_{y_j} Q)_{1 \leq j \leq n}
	= 0,
	\end{align*}
then
	\begin{align*}
	H(\varepsilon, \varepsilon)
	\geq \delta_0 \left( \int |\nabla \varepsilon|^2 dy + \int |\varepsilon|^2 e^{-|y|} dy \right).
	\end{align*}
\end{SP}

The proof of the Spectral Property in any dimension is not complete.
It has been proved in \cite{MerRa0} for dimension $n=1$ by using the explicit solution
$Q(x) = \left( \frac{3}{\cosh^2(2x)} \right)^\frac{1}{4}$ to  \eqref{eq:ground-state}. %\edz{$Q$ checked aug.2}
One can also find a computer assisted proof of the Spectral Property in dimensions $n = 2, 3, 4$ in \cite{FiMerRa}.
For dimension $5$ and higher see Remark %\ref{r:N5_orthogon} and 
\ref{re:N56_orthogonal}.
%At this stage, we are not aware of any analytic proof (or disproof) for all dimensions

 The  Spectral Property is equivalent to the coercivity for $L_1$ and $L_2$ on quadratic forms,
 the study of which involving the ground
state solution $Q$ naturally appears in a perturbation setting when dealing with stability problem. 
These two operators are related to the Lyapounov functionals $L_\pm$,  where 
$L_+=-\De+1 -(1+\frac{4}{n})  Q^{\frac{4}{n}} $ and
$L_-=-\De+1 -Q^{\frac{4}{n}} $, see \cite{FiMerRa}. 

%quadratic forms are then related to the asymptotic form of the Hamiltonian near Q and their coercivity properties can be derived from the variational formulation of Q  Weinstein 

Using the Spectral Property, 
Merle and Rapha\"el obtained the following blowup rate for  \eqref{eq:nls-phi} in the absence of potentials,
see  \cite{MerRa1,FiMerRa,MeRaSz10}. %Let $H^1$ be the usual Sobolev space
\begin{theorem}\label{thm:log-log-phi}
Let $p = 1 + \frac{4}{n}$ in \eqref{eq:nls-phi}. 
Let $1\le n \leq 5$. %and assume the spectral property \ref{h:spectral_property} holds
 There exists a universal constant $\alpha^* > 0$ such that the following is true.
 Suppose	$\varphi_0 \in B_{\alpha^*}$ %	:= \left\{ \phi \in H^1 \middle| \int |Q|^2 dx < \int |\phi|^2 dx < \int |Q|^2 dx + \alpha^* \right\}
satisfies
	\begin{align*}
	\int |\nabla \varphi_0|^2 -\frac{n}{n+2} \int |\varphi_0|^{2+\frac{4}{n}}
	< 0.
	\end{align*}%\edz{in Th.\ref{thm:log-log-u}, it is $E_0(u_0)<0$?}
Then  $\varphi \in C([0, T); H^1)$ is a blowup solution of  \eqref{eq:nls-phi} in finite time $T < \infty$,
which  admits the $\log$-$\log$ blowup rate
	\begin{align}\label{e:log-log_Q}
	\lim_{t \rightarrow T}
	\frac{\vert \nabla \varphi(t) \vert_2}{\vert \nabla Q \vert_2}
	\sqrt{ \frac{T - t}{\log \left| \log (T - t)  \right|} }
	= \frac{1}{\sqrt{2 \pi}}.
	\end{align}
%here $Q$ is the unique solution to \eqref{eq:ground-state}.
\end{theorem}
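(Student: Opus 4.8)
This is the log-log theorem of Merle and Rapha\"el; its complete proof spans \cite{MerRa1,FiMerRa,MeRaSz10} and is used here as a black box. I will outline its architecture --- the modulation-plus-bootstrap method --- which relies crucially on the Spectral Property just stated. Throughout, $\vphi_0\in B_{\al^*}$ has negative energy and $\al^*$ is taken small.

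\textbf{Decomposition near the ground state.} The plan is to prove that such a $\vphi$ enters and remains in a ``trapped tube'' around the ground-state manifold. One writes, on the life-span,
\begin{align*}
\vphi(t,x)=\frac{1}{\lambda(t)^{n/2}}\big(Q_{b(t)}+\varepsilon\big)\Big(s,\frac{x-x(t)}{\lambda(t)}\Big)e^{i\ga(t)},\qquad \frac{ds}{dt}=\frac{1}{\lambda^2},
\end{align*}
where $Q_b$ (with $Q_0=Q$) is an explicit one-parameter family of cut-off approximate solutions of the self-similar profile equation $\Delta Q_b-Q_b+ib\big(\tfrac n2 Q_b+y\cdot\nabla Q_b\big)+Q_b^{1+4/n}=0$, whose excess mass and the mass flux through the cut-off $\{|y|\sim 1/b\}$ are exponentially small, of order $e^{-\pi/b}$; this is what will drive the rate. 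The parameters $(\lambda,x,\ga,b)$ are fixed by imposing on $\varepsilon=\varepsilon_1+i\varepsilon_2$ the orthogonality relations of the Spectral Property, $(\varepsilon_1,Q)=(\varepsilon_1,Q_1)=(\varepsilon_1,y_jQ)=(\varepsilon_2,Q_1)=(\varepsilon_2,Q_2)=(\varepsilon_2,\partial_{y_j}Q)=0$, which remove the (generalized) kernel directions of the linearized operators; the implicit function theorem makes this legitimate as long as $b$ and $\varepsilon$ stay small, which smallness of $\al^*$ ensures at $t=0$.

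\textbf{The two engines.} Substituting the decomposition into \eqref{eq:nls-phi} produces, first, modulation ODEs in $s$ whose leading part is $-\lambda_s/\lambda\approx b$, $\ga_s\approx 1$, $x_s\approx 0$, and, decisively, $b_s\approx -c\,e^{-\pi/b}$ with $c>0$ depending on $Q$, all up to remainders controlled by $\|\varepsilon\|^2$, together with an almost-conservation law for a renormalized mass/energy. Second, one controls $\varepsilon$ by a mixed energy--Morawetz Lyapunov functional $\mathcal F(\varepsilon)\approx H(\varepsilon,\varepsilon)+(\text{localized corrections})$: the Spectral Property supplies the coercivity $H(\varepsilon,\varepsilon)\ge\delta_0\big(\int|\nabla\varepsilon|^2+\int|\varepsilon|^2e^{-|y|}\big)$ on the orthogonal complement of the modulation directions, and together with a localized virial identity this makes $\mathcal F$ almost non-increasing along the flow. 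The coercivity is exactly what closes the bootstrap: $\|\varepsilon(s)\|$ stays small in this local norm for all $s$, the decomposition never degenerates, $b(s)>0$ stays small and decreases, and $\lambda(s)\to 0$. That $\vphi_0$ starts in this regime --- in particular $b(0)>0$, rather than the $b\le 0$ that would signal dispersion and global existence --- is where the negative-energy hypothesis \eqref{eq:negative-initial-energy} (here for $\vphi_0$) is used: it excludes both global existence and the self-similar $(T-t)^{-1}$ scenario, pinning $\vphi$ into the log-log tube.

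\textbf{From the ODEs to the rate, and the main obstacle.} Integrating $-\lambda_s/\lambda=b$ against $b_s\approx -c\,e^{-\pi/b}$ gives $b(s)\sim \frac{\pi}{\log s}$ and $\log\frac1{\lambda(s)}\sim \frac{\pi s}{\log s}$, hence $\log\log\frac1\lambda\sim\log s$; returning to the time variable via $dt=\lambda^2\,ds$ shows $T=\int_0^{+\iy}\lambda(s)^2\,ds<\iy$ and, as $t\to T$,
\begin{align*}
\lambda(t)\sim\Big(\frac{2\pi(T-t)}{\log|\log(T-t)|}\Big)^{1/2},
\end{align*}
the $2\pi$ issuing from the exponent $\pi$ in $e^{-\pi/b}$ (itself from the exponential tail of $Q$) and the factor from $\int_s^{+\iy}\lambda^2\,ds'\sim\lambda(s)^2/(2b(s))$. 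Since $\norm{\nabla\vphi(t)}_2=\lambda(t)^{-1}\norm{\nabla(Q_b+\varepsilon)(s)}_2$ with the right-hand factor tending to $\norm{\nabla Q}_2$, \eqref{e:log-log_Q} follows. The genuinely hard part is reconciling the two engines: one must show that the coercivity of the Spectral Property --- in a norm with a gain at spatial infinity --- absorbs all the quadratic and cubic errors that appear both in the modulation ODEs and in $\mathcal F'$, and, simultaneously, that the sharp law $b_s\approx -c\,e^{-\pi/b}$ is robust under those errors, since it is this law alone that yields the double logarithm rather than a mere power rate; the whole argument must then be run globally in the rescaled time $s$, which increases to $+\iy$ while $t\to T<\iy$. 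The hypothesis $1\le n\le 5$ enters only through the present status of the Spectral Property.
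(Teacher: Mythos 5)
The paper does not prove Theorem \ref{thm:log-log-phi} either: it imports it as a known result from \cite{MerRa1,FiMerRa,MeRaSz10} (subject to the Spectral Property, cf.\ Remark \ref{re:N56_orthogonal} for $n=5$), which is exactly what you do, and your outline of the Merle--Rapha\"el modulation-plus-bootstrap machinery is a faithful summary of those references. One minor slip of no consequence for the black-box use: the exponentially small quantity of order $e^{-\pi/b}$ is the flux (and energy) of the localized profile $\tilde Q_b$, whereas its excess mass over $Q$ is only quadratically small in $b$.
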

%\begin{remark}\label{r:N5_orthogon} 
%If in the theorem, the condition reads instead, \begin{align*}
%	\varphi_0 \in  \left\{ \phi \in H^1: \int |Q|^2  < \int |\phi|^2 < \int |Q|^2  + \alpha^* \right\}
	%\end{align*} with
%	\begin{align*}	\int |\nabla \varphi_0|^2 -\frac{n}{n+2} \int |\varphi_0|^{\frac{4}{n}+2}<0,\end{align*}
%then the conclusion of Theorem \ref{thm:log-log-phi} holds under the same Hypothesis \ref{h:spectral_property}, 
%which is  numerically valid for $n\le 5$, see \cite{MeRaSz10}.\edz{however it seems to be inconsistent with Remark \ref{re:N56_orthogonal} if $n=5$??} 
%\end{remark}

\begin{remark}\label{re:N56_orthogonal} 
From \cite{FiMerRa} we know that the Spectral Property is true in dimensions $n=1,2,3,4$.
%If $n=5,6$,   the Spectral Property as stated is false. 
% What goes wrong is not the number of negative eigenvalues which is the same as for N = 4, but the explicit
%choice of orthogonality conditions which is no longer enough  to ensure the positivity.
%More precisely, the set of orthogonality conditions given for H2 is still good, but not for H1.
 If  $n= 5$, Theorem \ref{thm:log-log-phi} continues to hold true as soon as
  the Spectral Property verifies 
  the orthogonality condition 
  with  $(\veps_1, Q_1) = 0$ replaced by $(\veps_1, |y|^2Q) = 0$, which is numerically verified in \cite{FiMerRa}. 
% new orthogonality condition and Lemma 8 in [MerRa04] ``on universality of blow up profile for L2 critical nonlinear Schrodinger equation. Invent. Math 156 (2004), 565--672'' 
For the above-mentioned reason, Theorem \ref{thm:log-log-phi} remains open in dimensions $n\ge 6$.
\end{remark}

\begin{remark} It is well-known that the $\log$-$\log$ law is {a} generic behavior for those blowup solutions  in the theorem,
whose proof relies on  algebraic cancellations  related to the topological degeneracy of the linear 
operators $L_1$ and  $L_2$ around $Q$. 
Such blowup rate is stable in the sense that
the set $\mathcal{U}_0$ in the log-log regime is open in $H^1$, where 
$\mathcal{U}_0$ denotes the set of all initial
data $\vphi_0$ in $B_{\al^*}$ %\left\{ \phi \in H^1: \int |Q|^2  < \int |\phi|^2< \int |Q|^2  + \alpha^* \right\}$
 so that the flow  $\vphi_0\mapsto \vphi(t)$ of (\ref{eq:nls-phi})
collapses  in finite time $T^* <\iy$ with the $\log$-$\log$ speed given in (\ref{e:log-log_Q}), see \cite{Raph05,FiMerRa}.
%\edz{given a remark here about the stability for  such blowup rate} 
\end{remark}

\section{The blowup rate %$\log$-$\log$ law 
for the rotational NLS}
In this section we  prove Theorem \ref{thm:log-log-u}.
We will always assume $p = 1 + \frac{4}{n}$ in both \eqref{eq:nls-u} and \eqref{eq:nls-phi}. 
We will need a  virial identity for \eqref{eq:nls-phi} and the $\mathcal{R}$-transform introduced in Proposition \ref{p:u-phi-R_pseudo-conformal}. 
This  transform  gives a relation between the two solutions of  \eqref{eq:nls-u} and \eqref{eq:nls-phi},
%This transform given in \eqref{eq:phi-to-u},
which is  coined as a combination of the lens transform and the rotation $e^{t A\cdot \nabla}$. One  can view it as certain pseudo-conformal symmetry in the rotational case, see \cite{Car,ZhuZhang} in the presence of a quadratic potential only, i.e., $M=0$ and $\ga\ne 0$. 

The following is a standard virial identity for \eqref{eq:nls-phi} in the weighted Sobolev space $\sH^1$, which can be proved by a direct calculation. 
\begin{lemma}\label{lem:virial-identity} 
Let $\varphi$ be a solution to the problem \eqref{eq:nls-phi} in $C([0, T), \sH^1)$. 
Define $J(t) := \int |x|^2 |\varphi|^2 dx$. Then
	\begin{align*}
	J'(t)
	= 4 \Im \int x \overline{\varphi} \cdot \nabla \varphi dx, \qquad
	J''(t)
	= 8 \mathcal{E}(\varphi_0),
	\end{align*}
where
	\begin{align*}
	\mathcal{E}(\varphi) = \int \left( |\nabla \varphi|^2 - \frac{n}{n+2} |\varphi|^{\frac{4}{n} + 2} \right) dx.
	\end{align*}
\end{lemma}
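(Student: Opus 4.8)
The plan is to obtain both identities by differentiating $J(t)=\int|x|^2|\varphi|^2\,dx$ under the integral sign, using \eqref{eq:nls-phi} to trade each time derivative for spatial derivatives and then integrating by parts in $x$. For $\varphi\in C([0,T),\sH^1)$ the quantity $J(t)$ is finite since $x\varphi\in L^2$, and $\Im\int x\overline{\varphi}\cdot\nabla\varphi\,dx$ is finite since $x\varphi,\nabla\varphi\in L^2$; the only genuine issue is justifying the differentiation and the vanishing of the boundary terms, which I postpone to the last paragraph.

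\textbf{First identity.} Writing $\partial_t|\varphi|^2=2\Re(\overline{\varphi}\,\varphi_t)$ and substituting $\varphi_t=i(\Delta\varphi+|\varphi|^{p-1}\varphi)$ from \eqref{eq:nls-phi}, the real part of $i\overline{\varphi}|\varphi|^{p-1}\varphi=i|\varphi|^{p+1}$ vanishes, so
\[
J'(t)=-2\,\Im\int|x|^2\,\overline{\varphi}\,\Delta\varphi\,dx .
\]
Integrating by parts, $\int|x|^2\overline{\varphi}\,\Delta\varphi\,dx=-\int\bigl(2x\overline{\varphi}+|x|^2\nabla\overline{\varphi}\bigr)\cdot\nabla\varphi\,dx$, and since $\nabla\overline{\varphi}\cdot\nabla\varphi=|\nabla\varphi|^2$ is real its imaginary part drops out, giving $\Im\int|x|^2\overline{\varphi}\,\Delta\varphi\,dx=-2\,\Im\int x\overline{\varphi}\cdot\nabla\varphi\,dx$ and hence $J'(t)=4\,\Im\int x\overline{\varphi}\cdot\nabla\varphi\,dx$.

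\textbf{Second identity.} I differentiate the formula for $J'$ once more, again inserting $\varphi_t$ from \eqref{eq:nls-phi} and integrating by parts. This is the classical virial (Pohozaev-type) computation; after collecting the terms one is left with
\[
J''(t)=8\int|\nabla\varphi|^2\,dx-\frac{4n(p-1)}{p+1}\int|\varphi|^{p+1}\,dx .
\]
With $p=1+\tfrac{4}{n}$ one has $n(p-1)=4$ and $\tfrac{2}{p+1}=\tfrac{n}{n+2}$, so the right-hand side equals $8\bigl(\int|\nabla\varphi|^2-\tfrac{n}{n+2}\int|\varphi|^{4/n+2}\bigr)=8\,\mathcal{E}(\varphi(t))$. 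Finally, $\mathcal{E}(\varphi)=\int|\nabla\varphi|^2-\tfrac{2}{p+1}\int|\varphi|^{p+1}$ is precisely twice the conserved Hamiltonian of \eqref{eq:nls-phi}, so $\mathcal{E}(\varphi(t))\equiv\mathcal{E}(\varphi_0)$ and $J''(t)=8\,\mathcal{E}(\varphi_0)$.

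\textbf{The main obstacle: rigor.} The computation above is formal: for $\varphi$ merely in $C([0,T),\sH^1)$, neither differentiation under the integral nor the cancellation of the boundary terms (which involve $|x|^2\overline{\varphi}\nabla\varphi$ at infinity) is immediate, and $\Delta\varphi$ need not be locally integrable against $|x|^2\overline{\varphi}$ a priori. The standard remedy — and the reason the lemma is called ``standard'' — is to prove the identities first for smooth, rapidly decreasing solutions, where every step is legitimate, and then pass to general $\varphi_0\in\sH^1$ by approximation together with the continuous dependence supplied by the local theory; alternatively one runs the computation on the truncated functional $J_R(t)=\int\chi(x/R)\,|x|^2|\varphi|^2\,dx$ with a smooth cutoff $\chi$, keeps track of the $R$-dependent error terms, and sends $R\to\infty$. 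I expect this regularization argument to be the only real work in the proof.
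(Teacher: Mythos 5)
Your computation is correct and is exactly the ``direct calculation'' the paper invokes (it gives no proof beyond that remark): differentiate $J$, substitute the equation, integrate by parts, use $n(p-1)=4$ and $\tfrac{2}{p+1}=\tfrac{n}{n+2}$ to recognize $J''=8\mathcal{E}(\varphi(t))$, and conclude by energy conservation, with the usual density/truncation argument handling rigor for $\sH^1$ solutions. No gaps beyond the standard approximation step you already flag.
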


%\begin{proof} By direct computation,  	\begin{align*} 	J'(t)	 =2 \Re \int |x|^2 \overline{\varphi} \varphi_t dx
	%= 2 \Re \int |x|^2 \overline{\varphi} \left( i \Delta \varphi + i |\varphi|^{p-1} \varphi \right) dx
%	= 4 \Im \int x \overline{\varphi} \cdot \nabla \varphi dx.  \quad checked against [Merle-Raphael02\_report]
%	\end{align*}
%Then  \begin{align*} J''(t)	= -8 \Im \int x \cdot \nabla \overline{\varphi} \varphi_t dx
%	- 4n \Im \int \overline{\varphi} \varphi_t dx  = S + T,
	%\end{align*} where \begin{align*} S= -8 \Im \int x \cdot \nabla \overline{\varphi} \left( i \Delta \varphi + i |\vphi|^\frac{4}{n} \vphi\right)dx
	%= (8 - 4n) \int |\nabla \varphi|^2 dx+ \frac{4n^2}{n+2} \int |\varphi|^{\frac{4}{n}+2} dx,	\end{align*}
%and	\begin{align*}
	%T= - 4 n \Im \int \overline{\varphi} \left( i \Delta \varphi + i |\varphi|^\frac{4}{n} \varphi \right) dx
%	= 4n \int |\nabla \varphi|^2 dx	- 4 n \int |\varphi|^{\frac{4}{n}+2} dx.  \end{align*}
%\end{proof}

%\begin{remark}  The identity for $J''$ in Lemma \ref{lem:virial-identity} is also called virial identity.  \end{remark}

\begin{lemma}\label{lem:e^M}
\begin{enumerate}
\item[\textup{(a)}] Given any real $n$ by $n$ matrix $M$  and any  function $f$ in $C_0^\iy\cap L^2(\R^n)$, 
we have
	\begin{align}\label{E:etMx}
	e^{t (M x) \cdot \nabla} f(x)
	= f(e^{tM} x).
	\end{align}
\item[\textup{(b)}] If $M$ is a real skew-symmetric matrix, then $e^{tM}\in SO(n)$ for all $t$, where $SO(n)$ is the group of $n$ by $n$ orthogonal matrices with determinant $1$. 
Moreover, $(Mx) \cdot \nabla$ and $\Delta$ commute,
i.e.,
	\begin{align*}
	[(Mx) \cdot \nabla, \Delta] = 0.
	\end{align*}
\end{enumerate}\end{lemma}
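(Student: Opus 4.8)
The plan is to prove (a) by identifying $f\mapsto e^{t(Mx)\cdot\nabla}f$ with the pull‑back flow along the linear ODE $\dot{x}=Mx$, and then to read off (b) either by a direct commutator computation or, more conceptually, from (a). First, for (a), I would fix the meaning of $e^{t(Mx)\cdot\nabla}$ as the one‑parameter group generated by the first‑order operator $(Mx)\cdot\nabla$, equivalently the group whose action on $f$ is the unique solution $w(t,\cdot)$ of the transport equation $\partial_t w=(Mx)\cdot\nabla w$ with $w(0,\cdot)=f$; this is precisely the flow that splits off the rotation from \eqref{eq:nls-u}. (A naive power series in $t$ is not the right interpretation, since a nonzero $f\in C_0^\infty$ is not real‑analytic.) Then I would set $w(t,x):=f(e^{tM}x)$, which is smooth with $w(0,x)=f(x)$, and verify by the chain rule that it solves the same equation: on one hand $\partial_t w(t,x)=(\nabla f)(e^{tM}x)\cdot(Me^{tM}x)$, and on the other, using $\nabla_x\bigl[f(e^{tM}x)\bigr]=(e^{tM})^{T}(\nabla f)(e^{tM}x)$, one gets $(Mx)\cdot\nabla_x w(t,x)=\bigl(e^{tM}Mx\bigr)\cdot(\nabla f)(e^{tM}x)$. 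These two expressions coincide because $M$ and $e^{tM}$ commute, so $e^{tM}Mx=Me^{tM}x$. Uniqueness of the transport flow — via the characteristics $\dot{X}(s)=MX(s)$, or because $C_0^\infty$ is a core for the generator of the pull‑back group on $L^2$ — then yields \eqref{E:etMx}.

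Next, for the first assertion of (b), $M^{T}=-M$ gives $(e^{tM})^{T}=e^{tM^{T}}=e^{-tM}=(e^{tM})^{-1}$, so $e^{tM}$ is orthogonal, and $\det e^{tM}=e^{t\,\mathrm{tr}\,M}=1$ since a skew‑symmetric matrix has zero trace; hence $e^{tM}\in SO(n)$ for all $t$. For the commutator $[(Mx)\cdot\nabla,\Delta]=0$ I would write $X:=(Mx)\cdot\nabla=\sum_{i,j}M_{ij}x_j\partial_i$, use $[x_j\partial_i,\partial_k]=-\delta_{jk}\partial_i$ to obtain $[X,\partial_k]=-\sum_i M_{ik}\partial_i$, whence $[X,\partial_k^2]=[X,\partial_k]\partial_k+\partial_k[X,\partial_k]=-2\sum_i M_{ik}\partial_i\partial_k$, and then sum over $k$: the outcome is $-2\sum_{i,k}M_{ik}\partial_i\partial_k$, which vanishes because $M_{ik}=-M_{ki}$ is antisymmetric in $(i,k)$ while $\partial_i\partial_k$ is symmetric. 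Alternatively — and this is perhaps cleaner once (a) is in hand — combine (a) with the first part of (b): $e^{tX}f=f\circ e^{tM}$ is an orthogonal change of variables, under which $\Delta$ is invariant, so $\Delta e^{tX}=e^{tX}\Delta$ for all $t$, and differentiating at $t=0$ gives $[X,\Delta]=0$.

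No step here is genuinely difficult. The only point requiring care is pinning down the definition of the exponential $e^{t(Mx)\cdot\nabla}$ and invoking uniqueness for the first‑order transport flow; granting that, (a) is a one‑line chain‑rule computation resting on $[M,e^{tM}]=0$, and (b) reduces to two short linear‑algebra and commutator calculations.
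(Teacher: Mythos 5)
Your proposal is correct and follows essentially the same route as the paper: part (a) is proved exactly as the authors indicate, by checking that both $e^{t(Mx)\cdot\nabla}f$ and $f(e^{tM}x)$ solve the transport problem $\partial_t F=(Mx)\cdot\nabla F$, $F(0,\cdot)=f$, with the chain rule and $[M,e^{tM}]=0$, while part (b) is the same ``straightforward calculation'' the paper leaves to the reader (skew-symmetry gives orthogonality and $\det e^{tM}=e^{t\,\mathrm{tr}\,M}=1$, and the commutator vanishes by the antisymmetric-times-symmetric cancellation). Your added remarks on the correct interpretation of the exponential and on uniqueness of the flow are sound refinements of the same argument, not a different approach.
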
 
Part (a) can be proven by showing that both sides of \eqref{E:etMx} obey the ODE
\begin{align*}
	\partial_t F(t, x)= (Mx) \cdot \nabla F(t, x),  \quad F(0, x) = f(x).
	\end{align*}
Part (b) follows from a straightforward calculation.

\begin{proposition}\label{p:u-phi-R_pseudo-conformal}
Let $\varphi(t,x) \in C([0,T), H^1)$ be a solution to \eqref{eq:nls-phi} where $T > 0$.
Define the $\mathcal{R}$ transform $\vphi\mapsto \mathcal{R}(\vphi)$ to be
	\begin{align}\label{eq:phi-to-u}
	\mathcal{R}\vphi(t, x)
	:= \frac{1}{\cos^\frac{n}{2}(2 \gamma t)}
	e^{-i \frac{\gamma}{2} |x|^2 \tan(2 \gamma t)}
	\varphi\left( \frac{\tan(2 \gamma t)}{2 \gamma}, \frac{e^{tM} x}{\cos(2 \gamma t)} \right).
	\end{align}
Then $u=\mathcal{R}\vphi$ is a solution to \eqref{eq:nls-u} in $C([0, \frac{\arctan(2 \gamma T)}{2 \gamma}), \mathscr{H}^1)$.

Conversely, let $u(t,x) \in C([0,T^*), \mathscr{H}^1)$
be a solution to \eqref{eq:nls-u} where $T^* \in (0, \frac{\pi}{4 \gamma}]$.
Then $\vphi=\mathcal{R}\inv u$,  given by 
	\begin{align}\label{eq:u-to-phi}
	\varphi(t, x)
	:= \frac{1}{(1 + (2 \gamma t)^2)^\frac{n}{4}}
	e^{i \frac{\gamma^2 |x|^2 t}{1 + (2 \gamma t)^2}}
	u\left( \frac{\arctan(2 \gamma t)}{2 \gamma}, \frac{e^{-tM} x}{\sqrt{ 1 + (2 \gamma t)^2}} \right),
	\end{align}%\edz{\eqref{eq:u-to-phi} checked aug.5, 3:23am}
is a solution to \eqref{eq:nls-phi} in $C([0, \frac{\tan(2 \gamma T^*)}{2 \gamma}), H^1)$, where $\mathcal{R}\inv $ is the inverse of $\mathcal{R}$.
\end{proposition}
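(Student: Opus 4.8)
The plan is to exhibit the $\mathcal{R}$-transform as a composition of two elementary maps: the classical lens (pseudo-conformal) transform adapted to the harmonic oscillator, which manufactures the potential $\gamma^2|x|^2$ out of the free Schr\"odinger flow, followed by a time-dependent rotation of the spatial variable, which manufactures the angular momentum term $i(Mx)\cdot\nabla$. Concretely, since $e^{tM}$ is orthogonal by Lemma \ref{lem:e^M}(b), we have $|e^{tM}x| = |x|$, so the Gaussian phase and the $x$-independent prefactor in \eqref{eq:phi-to-u} are unaffected by replacing $x$ with $e^{tM}x$, and hence
\[
\mathcal{R}\varphi(t,x) = v\big(t, e^{tM}x\big), \qquad
v(t,x) := \frac{1}{\cos^{n/2}(2\gamma t)}\, e^{-i\frac{\gamma}{2}|x|^2\tan(2\gamma t)}\, \varphi\!\left(\frac{\tan(2\gamma t)}{2\gamma}, \frac{x}{\cos(2\gamma t)}\right).
\]
Thus it suffices to show (i) $v$ solves the non-rotating equation $iv_t = -\Delta v + \gamma^2|x|^2 v - |v|^{p-1}v$ whenever $\varphi$ solves \eqref{eq:nls-phi}, and (ii) that $u(t,x) := v(t,e^{tM}x)$ then solves \eqref{eq:nls-u}.

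I would carry out step (ii) first, since it isolates the role of the rotation and is short. Differentiating $u(t,x) = v(t,e^{tM}x)$ and using $\frac{d}{dt}(e^{tM}x) = Me^{tM}x$ gives $\partial_t u = (\partial_t v)(t,e^{tM}x) + (\nabla v)(t,e^{tM}x)\cdot Me^{tM}x$. Because $e^{tM}$ is orthogonal, $\Delta_x u = (\Delta v)(t,e^{tM}x)$ and $|x|^2 = |e^{tM}x|^2$, while $|u|^{p-1}u = (|v|^{p-1}v)(t,e^{tM}x)$; and since $\nabla_x u = (e^{tM})^{T}(\nabla v)(t,e^{tM}x)$ together with $e^{tM}M = Me^{tM}$, one gets $(Mx)\cdot\nabla_x u = (Me^{tM}x)\cdot(\nabla v)(t,e^{tM}x)$, which is exactly the extra term appearing in $\partial_t u$. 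Substituting everything into \eqref{eq:nls-u} and invoking the equation satisfied by $v$, the two sides coincide identically. The same orthogonal change of variables shows the $\mathscr{H}^1$-norms of $u(t)$ and $v(t)$ are comparable, so $u\in C(\cdot,\mathscr{H}^1)$ as soon as $v$ is.

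For step (i) — the lens transform — I would verify the identity directly: apply the chain rule to $v$, use $i\varphi_s = -\Delta_y\varphi - |\varphi|^{p-1}\varphi$, and complete the square in the Gaussian phase. The points to watch are that the quadratic-in-$x$ contributions (one from $-\Delta$ hitting the Gaussian, one from $\partial_t$ of the phase) assemble to exactly $\gamma^2|x|^2 v$, which pins down the frequency $2\gamma$ and the phase coefficient $\gamma/2$ in \eqref{eq:phi-to-u}; the first-order-in-$x$ terms and the terms from differentiating the prefactor cancel in pairs; and the powers of $\cos(2\gamma t)$ in front of the nonlinearity balance precisely when $p = 1+\frac4n$. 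This is the classical pseudo-conformal symmetry of the quadratically confined NLS (cf. \cite{Car,ZhuZhang}), so one may also simply cite it. One then records, via the Jacobian of $x\mapsto \frac{e^{tM}x}{\cos(2\gamma t)}$, the identities $\int|u(t)|^2\,dx = \int|\varphi(s)|^2\,dx$ and $\int|x|^2|u(t)|^2\,dx = \cos^2(2\gamma t)\int|x|^2|\varphi(s)|^2\,dx$ with $s = \frac{\tan(2\gamma t)}{2\gamma}$ (together with the analogous, slightly longer, identity for the gradient), which yields the $\mathscr{H}^1$-continuity; the time range is $[0,\tfrac{\arctan(2\gamma T)}{2\gamma})$ because $t\mapsto \frac{\tan(2\gamma t)}{2\gamma}$ is a smooth increasing bijection of $[0,\frac{\pi}{4\gamma})$ onto $[0,\infty)$.

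For the converse, $\mathcal{R}$ is a bijection on solutions: inverting the above change of time and space variables (replacing the $\tan$-substitution by its inverse $\arctan$-substitution and $e^{tM}$ by $e^{-tM}$, and conjugating the Gaussian phase and scaling factor) produces exactly formula \eqref{eq:u-to-phi}, and $\mathcal{R}$ fixes $t=0$ data since all trigonometric and rotation factors are trivial there. Hence, given a solution $u\in C([0,T^*),\mathscr{H}^1)$ of \eqref{eq:nls-u} with $T^*\le\frac{\pi}{4\gamma}$, let $\widetilde\varphi$ be the maximal $\mathscr{H}^1$-solution of \eqref{eq:nls-phi} with $\widetilde\varphi(0)=u(0)$; by the first part $\mathcal{R}\widetilde\varphi$ solves \eqref{eq:nls-u} with data $u(0)$, so the uniqueness statement in Proposition \ref{p:blup-altern-conserv} forces $\mathcal{R}\widetilde\varphi=u$ on their common time interval, and since $\mathcal{R}$ and $\mathcal{R}^{-1}$ carry finite-time $\mathscr{H}^1$-blowup across the time change $s\leftrightarrow t$, the lifespans correspond and $\varphi = \mathcal{R}^{-1}u = \widetilde\varphi$ solves \eqref{eq:nls-phi} on $[0,\frac{\tan(2\gamma T^*)}{2\gamma})$. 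The only genuinely laborious part of the whole argument is the bookkeeping in step (i) — tracking and cancelling the many quadratic, linear, prefactor and nonlinear terms generated by the chain rule and the Gaussian conjugation; step (ii) and the inversion argument are essentially formal once Lemma \ref{lem:e^M} is in hand.
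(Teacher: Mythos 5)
Your proposal is correct, and the verification at its core is the same chain-rule bookkeeping as in the paper, but you organize it differently. The paper differentiates the composite formula \eqref{eq:phi-to-u} all at once, computing $u_t$, $\Delta u$ and $i(Mx)\cdot\nabla u$ explicitly (its displays \eqref{eq:u_t}--\eqref{eq:imxdu}), and then observes that the rotation-induced first-order terms cancel, using $(Mx)\cdot x=0$ and the invariances of Lemma \ref{lem:e^M}; the converse is dismissed with the remark that the computation is reversible. You instead factor $\mathcal{R}$ as the classical harmonic-oscillator lens transform followed by the conjugation $u(t,x)=v(t,e^{tM}x)$, so that the rotation step becomes a three-line computation resting only on Lemma \ref{lem:e^M} (orthogonality of $e^{tM}$, $|e^{tM}x|=|x|$, $Me^{tM}=e^{tM}M$, invariance of $\Delta$), and the heavy Gaussian-phase bookkeeping is isolated in the non-rotating lens identity, which you may cite from \cite{Car,ZhuZhang} or reverify; you also handle the converse more carefully than the paper, by running $\mathcal{R}$ forward from the shared initial datum and invoking the uniqueness statement of Proposition \ref{p:blup-altern-conserv} together with the monotone time change $t\mapsto\tan(2\gamma t)/(2\gamma)$. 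Your modular route is cleaner and makes transparent exactly where the rotation enters, at the cost of leaning on the known lens transform; the paper's one-shot computation is self-contained and its intermediate formulas are reused almost verbatim in the gradient decomposition \eqref{eq:Du-I1-I2} of the main proof. One small point, common to both your write-up and the paper: to conclude $u(t)\in\mathscr{H}^1$ (in particular $xu(t)\in L^2$) one needs $x\varphi\in L^2$, i.e.\ $\varphi_0\in\mathscr{H}^1$ and persistence of the weighted regularity for \eqref{eq:nls-phi}, which is implicitly assumed when you write the identity for $\int |x|^2|u|^2\,dx$; it would be worth one sentence making that explicit.
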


\begin{proof} We will only briefly check \eqref{eq:phi-to-u} for $u=\mathcal{R}\phi$.
The other one $u\mapsto \vphi$ is the inverse transform. 
By direct computation,
we have
	\begin{align}\label{eq:u_t}
	\begin{split}
	u_t(t,x)
	& = n \gamma \frac{\sin(2 \gamma t)}{\cos^{\frac{n}{2}+1}(2 \gamma t)}
	e^{-i \frac{\gamma}{2} |x|^2 \tan(2 \gamma t)}
	\varphi\left( \frac{\tan(2 \gamma t)}{2 \gamma}, \frac{e^{tM} x}{\cos(2 \gamma t)} \right) \\
	& \quad - i \gamma^2 |x|^2 \frac{1}{\cos^{\frac{n}{2}+2}(2 \gamma t)}
	e^{-i \frac{\gamma}{2} |x|^2 \tan(2 \gamma t)}
	\varphi\left( \frac{\tan(2 \gamma t)}{2 \gamma}, \frac{e^{tM} x}{\cos(2 \gamma t)} \right) \\
	& \quad + \frac{1}{\cos^{\frac{n}{2}+2}(2 \gamma t)}
	e^{-i \frac{\gamma}{2} |x|^2 \tan(2 \gamma t)}
	\varphi_t\left( \frac{\tan(2 \gamma t)}{2 \gamma}, \frac{e^{tM} x}{\cos(2 \gamma t)} \right) \\
	& \quad + \frac{1}{\cos^{\frac{n}{2}+1}(2 \gamma t)}
	e^{-i \frac{\gamma}{2} |x|^2 \tan(2 \gamma t)}
	(e^{tM} M x) \cdot \nabla \varphi\left( \frac{\tan(2 \gamma t)}{2 \gamma}, \frac{e^{tM} x}{\cos(2 \gamma t)} \right) \\
	& \quad + 2 \gamma \frac{\sin(2 \gamma t)}{\cos^{\frac{n}{2}+2}(2 \gamma t)}
	e^{-i \frac{\gamma}{2} |x|^2 \tan(2 \gamma t)}
	(e^{tM} x) \cdot \nabla \varphi\left( \frac{\tan(2 \gamma t)}{2 \gamma}, \frac{e^{tM} x}{\cos(2 \gamma t)} \right).
	\end{split}
	\end{align}%\edz{\eqref{eq:u_t} checked\quad aug.3, 11:17pm}
To compute $\Delta u$ and $i (Mx) \cdot \nabla u$,
first note that
	\begin{align*}
	%& \nabla\left( e^{-i \frac{\gamma}{2} |x|^2 \tan(2 \gamma t)} \right)
	%= -i \gamma x \tan(2 \gamma t) e^{-i \frac{\gamma}{2} |x|^2 \tan(2 \gamma t)}, \\
	%& \Delta\left( e^{-i \frac{\gamma}{2} |x|^2 \tan(2 \gamma t)} \right)
	%= -i n \gamma \tan(2 \gamma t) e^{-i \frac{\gamma}{2} |x|^2 \tan(2 \gamma t)}
	%- \gamma^2 |x|^2 \tan^2(2 \gamma t) e^{-i \frac{\gamma}{2} |x|^2 \tan(2 \gamma t)}, \\
	& \nabla\left( \varphi\left( \frac{\tan(2 \gamma t)}{2 \gamma}, \frac{e^{tM} x}{\cos(2 \gamma t)} \right) \right)
	= \frac{1}{\cos(2 \gamma t)} (e^{tM})^T \nabla \varphi\left( \frac{\tan(2 \gamma t)}{2 \gamma}, \frac{e^{tM} x}{\cos(2 \gamma t)} \right), \\
	& \Delta\left( \varphi\left( \frac{\tan(2 \gamma t)}{2 \gamma}, \frac{e^{tM} x}{\cos(2 \gamma t)} \right) \right)
	= \frac{1}{\cos^2(2 \gamma t)} \Delta \varphi\left( \frac{\tan(2 \gamma t)}{2 \gamma}, \frac{e^{tM} x}{\cos(2 \gamma t)} \right),
	\end{align*}
where we used 
\begin{enumerate} 
\item[(a)] If $B\in M_{n\times n}$ is a constant matrix, then 
$\nabla (\vphi(B x) )=B^T (\nabla \vphi)(Bx)$;
\item[(b)] $\De=\dive(\nabla)$, $\dive{{\bf F}}=\mathrm{tr}(\frac{\pa (F_1\cdots,F_n)}{\pa(x_1,\cdots,x_n)})=\text{trace of the Jacobian of}\;\mathbf{F} $;
\item[(c)] If $C$ is a constant square matrix, $\mathbf{W}=[w_1,\cdots,w_n]^T$ is a vector-valued function of $x\in\R^n$, then
\begin{align*}
&\dive(C \mathbf{W})=\trace([ \nabla w_1, \cdots, \nabla w_n] C^T).
\end{align*}
\item[(d)] $\trace(U^* \Lam U)=\trace(\Lam)$ if $U$ is a unitary matrix and $\Lam\in M_{n\times n}$.
\end{enumerate}
Thus  we obtain
	\begin{align}\label{eq:laplacian-u}
%	\begin{split}
	\Delta u(t,x)
	%& = \frac{1}{\cos^\frac{n}{2}(2 \gamma t)}
	%\left[ \Delta\left( e^{-i \frac{\gamma}{2} |x|^2 \tan(2 \gamma t)} \right)
	%\varphi\left( \frac{\tan(2 \gamma t)}{2 \gamma}, \frac{e^{tM} x}{\cos(2 \gamma t)} \right) \right. \\
	%& \quad + 2 \nabla\left( e^{-i \frac{\gamma}{2} |x|^2 \tan(2 \gamma t)} \right)
	%\cdot \nabla\left( \varphi\left( \frac{\tan(2 \gamma t)}{2 \gamma}, \frac{e^{tM} x}{\cos(2 \gamma t)} \right) \right) \\
	%& \quad \left. + e^{-i \frac{\gamma}{2} |x|^2 \tan(2 \gamma t)}
	%\Delta\left( \varphi\left( \frac{\tan(2 \gamma t)}{2 \gamma}, \frac{e^{tM} x}{\cos(2 \gamma t)} \right) \right) \right] \\
	& = -i n \gamma \frac{\sin(2 \gamma t)}{\cos^{\frac{n}{2}+1}(2 \gamma t)}
	e^{-i \frac{\gamma}{2} |x|^2 \tan(2 \gamma t)}
	\varphi\left( \frac{\tan(2 \gamma t)}{2 \gamma}, \frac{e^{tM} x}{\cos(2 \gamma t)} \right) \notag\\
	& \quad - \gamma^2 |x|^2 \frac{\sin^2(2 \gamma t)}{\cos^{\frac{n}{2}+2}(2 \gamma t)}
	e^{-i \frac{\gamma}{2} |x|^2 \tan(2 \gamma t)}
	\varphi\left( \frac{\tan(2 \gamma t)}{2 \gamma}, \frac{e^{tM} x}{\cos(2 \gamma t)} \right) \notag\\
	& \quad -i 2 \gamma \frac{\sin(2 \gamma t)}{\cos^{\frac{n}{2}+2}(2 \gamma t)}
	e^{-i \frac{\gamma}{2} |x|^2 \tan(2 \gamma t)}
	(e^{tM} x) \cdot \nabla\varphi\left( \frac{\tan(2 \gamma t)}{2 \gamma}, \frac{e^{tM} x}{\cos(2 \gamma t)} \right) \notag\\
	& \quad + \frac{1}{\cos^{\frac{n}{2}+2}(2 \gamma t)}
	e^{-i \frac{\gamma}{2} |x|^2 \tan(2 \gamma t)}
	\Delta \varphi\left( \frac{\tan(2 \gamma t)}{2 \gamma}, \frac{e^{tM} x}{\cos(2 \gamma t)} \right),
	%\end{split}
	\end{align}
and, noting that $(Mx) \cdot x = 0$, 
	\begin{align}\label{eq:imxdu}
	%\begin{split}
	i (Mx) \cdot \nabla u
	%& = i \frac{1}{\cos^\frac{n}{2}(2 \gamma t)} (Mx) \cdot
	%\left[ \nabla \left( e^{-i \frac{\gamma}{2} |x|^2 \tan(2 \gamma t)} \right)
	%\varphi\left( \frac{\tan(2 \gamma t)}{2 \gamma}, \frac{{\color{blue}e^{tM}}x}{\cos(2 \gamma t)} \right) \right. \\
	%& \quad \left. + e^{-i \frac{\gamma}{2} |x|^2 \tan(2 \gamma t)}
	%\nabla \left( \varphi\left( \frac{\tan(2 \gamma t)}{2 \gamma}, \frac{{\color{blue}e^{tM}}x}{\cos(2 \gamma t)} \right) \right) \right] \\
	 =  \frac{i}{\cos^{\frac{n}{2}+1}(2 \gamma t)}
	e^{-i \frac{\gamma}{2} |x|^2 \tan(2 \gamma t)}
	(M e^{tM} x) \cdot \nabla  \varphi\left( \frac{\tan(2 \gamma t)}{2 \gamma}, \frac{e^{tM} x}{\cos(2 \gamma t)} \right).
	%\end{split}
	\end{align}%\edz{\eqref{eq:laplacian-u}, \eqref{eq:imxdu} checked aug.5th}
Bring \eqref{eq:phi-to-u},
\eqref{eq:u_t},
\eqref{eq:laplacian-u},
and \eqref{eq:imxdu} into Cauchy problem \eqref{eq:nls-u},
and recall that $\varphi$ satisfies \eqref{eq:nls-phi} with $p=1+4/n$,
hence, we see that $u$ is a solution to \eqref{eq:nls-u} with $u(0,x)=\vphi(0,x)$. 

The above virtually shows that under the relation $u=\mathcal{R}\vphi\iff \vphi=\mathcal{R}\inv u$,
$u$ satisfies  \eqref{eq:nls-u} if and only if $\vphi$ satisfies  \eqref{eq:nls-phi}.  Therefore the second part of the proposition is also true.
% \eqref{eq:u-to-phi} is also verified
\end{proof}
\begin{remark} The $\mathcal{R}$ transform also allows us to solve the linear equation for \eqref{eq:nls-u}.  
The equation %\eqref{eq:nls-phi} 
$i\pa_t\vphi=-\De \vphi $ has the fundamental solution
	\begin{align*}
	e^{i t \Delta} (x,y)%\varphi_0(x)
	%= K(t, \cdot) \ast \varphi_0 (x)
	= \frac{1}{(4 \pi i t)^\frac{n}{2}} e^{i \frac{|x - y|^2}{4t}} .%\varphi_0(y) dy.
	\end{align*}
Applying \eqref{eq:phi-to-u}  we then  obtain the fundamental solution to $i\pa_t u=H_{A, V}u$: %\eqref{eq:nls-u} 
	\begin{align}\label{e:fund-U(t)}
	 e^{-itH_{A,V}}(x,y)=\left( \frac{\gamma}{2 \pi i \sin(2 \gamma t)} \right)^\frac{n}{2}
	e^{i \frac{\gamma}{2} (|x|^2+|y|^2) \cot(2 \gamma t)}
	%e^{i \frac{\gamma}{2} \frac{|y|^2}{\tan(2 \gamma t)}}
	e^{-i \gamma \frac{(e^{tM} x) \cdot y}{\sin(2 \gamma t)}}.	%\varphi_0(y) dy
	\end{align}%\edz{checked aug.12}
This expression % obtained using a combination of the lens transform and rotation
 is significantly simpler than the one  in \cite{ChMarS16} per Mehler's formula. 
  To our best knowledge,  \eqref{e:fund-U(t)} might be  the first simply unified explicit formula compared with 
  \cite{Kita80,HaoHsiaoLi1, HaoHsiaoLi2, AnMaSpar}. 
\end{remark}

Now we are ready to prove Theorem \ref{thm:log-log-u}.
\begin{proof}[Proof of Theorem \ref{thm:log-log-u}]
Let $u \in C([0, T), \mathscr{H}^1)$ be the blowup solution to the  problem \eqref{eq:nls-u},
where $[0, T)$ is the maximal interval of existence.
Then by \eqref{eq:u-to-phi},
there is a $\varphi(t,x) \in C([0, \frac{\tan(2 \gamma T)}{2 \gamma}), \mathscr{H}^1)$ that solves  \eqref{eq:nls-phi},
where $[0, \frac{\tan(2 \gamma T)}{2 \gamma})$ is the maximal interval of existence.
Note that $u_0= \varphi_0$, 
and according to Theorem \ref{thm:log-log-phi}, 
the condition  \eqref{eq:negative-initial-energy} suggests that
 $\varphi$ is a blowup solution.
Recall  from \eqref{eq:phi-to-u}, for $T\in (0,\frac{\pi}{4\ga}]$
	\begin{align*}
	u(t, x)
	= \frac{1}{\cos^\frac{n}{2}(2 \gamma t)}
	e^{-i \frac{\gamma}{2} |x|^2 \tan(2 \gamma t)}
	\varphi\left( \frac{\tan(2 \gamma t)}{2 \gamma}, \frac{e^{tM} x}{\cos(2 \gamma t)} \right).
	\end{align*}
Then for all $t \in [0, T)$,
% $T \in (0, \frac{\pi}{4\gamma})$,
we have
	\begin{align}\label{eq:Du-I1-I2}
	\begin{split}
	\nabla_x u(t,x)
	& = -i \gamma x \frac{\sin(2 \gamma t)}{\cos^{\frac{n}{2}+1}(2 \gamma t)}
	e^{-i \frac{\gamma}{2} |x|^2 \tan(2 \gamma t)}
	\varphi\left( \frac{\tan(2 \gamma t)}{2 \gamma}, \frac{e^{tM} x}{\cos(2 \gamma t)} \right) \\
	& \quad + \frac{1}{\cos^{\frac{n}{2}+1}(2 \gamma t)}
	e^{-i \frac{\gamma}{2} |x|^2 \tan(2 \gamma t)}
	(e^{tM})^T \nabla \varphi\left( \frac{\tan(2 \gamma t)}{2 \gamma}, \frac{e^{tM} x}{\cos(2 \gamma t)} \right) \\
	& := I_1 + I_2.
	\end{split}
	\end{align}%\edz{checked aug.6, 1:39am}
%We will estimate $\| \nabla u \|_{L_x^2}$ by estimating $\| I_1 \|_{L_x^2}$ and $\| I_2 \|_{L_x^2}$ respectively.

For $ I_1 $, a change of variable gives
	\begin{align*}
	\vert I_1 \vert_{2}
	= \gamma \sin(2 \gamma t) \left\vert x \varphi\left( \frac{\tan(2 \gamma t)}{2 \gamma}, x \right) \right\vert_{2}.
	\end{align*}
Let $J(t) := \vert x \varphi(t, x) \vert_{2}^2$.
Then \begin{align*}
	J(t)
	= J(0) + J'(0) t + \int_0^t J''(\tau) \,(t - \tau) d\tau.
	\end{align*}
Note that
	\begin{align*}
	|J(0)|
	= \vert x \varphi_0 \vert_2^2
	\leq \vert \varphi_0 \vert_{\mathscr{H}^1}^2,
	\end{align*}
and by Lemma \ref{lem:virial-identity},
we have
\begin{align*}
	|J'(0)|
	& = \left| 4 \Im \int x \overline{\varphi_0} \cdot \nabla \varphi_0 dx \right|
	\leq 4 \| x \varphi_0 \|_2 \| \nabla \varphi_0 \|_2
	\leq  \| \varphi_0 \|_{\mathscr{H}^1}^2, \\
	J''(t)
	& = 8 \mathcal{E}(\varphi_0).
	\end{align*}
Thus
	\begin{align*}
	& \left\vert x \varphi\left( \frac{\tan(2 \gamma t)}{2 \gamma}, x \right) \right\vert_{2}^2
	= J \left( \frac{\tan(2 \gamma t)}{2 \gamma} \right) \\
	& \leq |J(0)|
	+ |J'(0)| \frac{\tan(2 \gamma t)}{2 \gamma}
	+ 4 \mathcal{E}(\varphi_0) \left( \frac{\tan(2 \gamma t)}{2 \gamma} \right)^2 \\
	& \leq \vert \varphi_0 \vert_{\mathscr{H}^1}^2
	+  \vert\varphi_0 \vert_{\mathscr{H}^1}^2 \frac{\tan(2 \gamma T)}{2 \gamma}
	+ 4 \mathcal{E}(\varphi_0) \left( \frac{\tan(2 \gamma T)}{2 \gamma} \right)^2,
	\end{align*}
and so
	\begin{align}\label{eq:I1-estimate}
	\vert I_1 \vert_{2} \leq C(\varphi_0, T).
	\end{align}%\edz{checked aug.6, 2:33 am}
	
For $I_2 $, in view of Lemma \ref{lem:e^M}, $e^{tM^T}\in SO(n)$,  
a change of variable gives for $t\in [0,T)$, ($T\le\pi/4\ga$)
	\begin{align*}
	\vert I_2 \vert_{2}
	= \frac{1}{\cos (2 \gamma t)} \left\vert \nabla \varphi\left( \frac{\tan(2 \gamma t)}{2 \gamma}, x \right) \right\vert_{2}.
	\end{align*}
As $t \rightarrow T$,
$\frac{\tan(2 \gamma t)}{2 \gamma} \rightarrow \frac{\tan(2 \gamma T)}{2 \gamma}$,
so by Theorem \ref{thm:log-log-phi},
	\begin{align*}
	\lim_{t \rightarrow T}
	\frac{\left\vert \nabla \varphi \left( \frac{\tan(2 \gamma t)}{2 \gamma}, x \right) \right\vert_{2}}{\vert\nabla Q \vert_{2}}
	\sqrt{ \frac{\frac{\tan(2 \gamma T)}{2 \gamma} - \frac{\tan(2 \gamma t)}{2 \gamma}}
	{\log \left| \log \left( \frac{\tan(2 \gamma T)}{2 \gamma} - \frac{\tan(2 \gamma t)}{2 \gamma} \right) \right|} }
	= \frac{1}{\sqrt{2 \pi}}.
	\end{align*}
%This can be rewritten as 	\begin{align*}
%	\lim_{t \rightarrow T}
%	\frac{\left\| \nabla \varphi \left( \frac{\tan(2 \gamma t)}{2 \gamma}, x \right) \right\|_{L^2}}{\| \nabla Q \|_{L^2}}
%	\sqrt{ \frac{\frac{\sin(2 \gamma (T - t))}{2 \gamma \cos(2 \gamma T) \cos(2 \gamma t)}}
%	{\log \left| \log \sin(2 \gamma (T - t)) - \log(2 \gamma \cos(2 \gamma T) \cos(2 \gamma t)) \right|} }
%	= \frac{1}{\sqrt{2 \pi}}.	\end{align*}
Note that as $t \rightarrow T$,
there are
	\begin{align*}
	\frac{\sin(2 \gamma (T - t))}{2 \gamma (T - t)} \rightarrow 1 \qquad \text{and} \qquad
	\frac{\log \sin(2 \gamma (T - t))}{\log(T - t)} \rightarrow 1,
	\end{align*}
so the above blowup rate can be simplified as
	\begin{align*}
	\lim_{t \rightarrow T}
	\frac{\left\vert \nabla \varphi \left( \frac{\tan(2 \gamma t)}{2 \gamma}, x \right) \right\vert_{2}}{\vert \nabla Q \vert_{2}}
	\sqrt{ \frac{T - t}{\log \left| \log (T - t) \right|} }
	= \frac{\cos(2 \gamma T)}{\sqrt{2 \pi}}
	\end{align*}
to yield 
	\begin{align}\label{eq:I2-estimate}
	\lim_{t \rightarrow T}
	\frac{\left\vert I_2 \right\vert_{2}}{\vert \nabla Q\vert_2}
	\sqrt{ \frac{T - t}{\log \left| \log (T - t) \right|} }
	= \frac{1}{\sqrt{2 \pi}}.
	\end{align}
%Recall that \eqref{eq:Du-I1-I2} implies
%	\begin{align*}	\| \nabla u \|_{L^2}
%	\leq \| I_1 \|_{L^2} + \| I_2 \|_{L^2}	\end{align*} and
%	\begin{align*}	\| \nabla u \|_{L^2}
	%\geq \| I_2 \|_{L^2} - \| I_1 \|_{L^2}.  \end{align*}
Therefore, combining \eqref{eq:I1-estimate} and \eqref{eq:I2-estimate} we obtain
	\begin{align*}
	\lim_{t \rightarrow T}
	\frac{\left\vert \nabla u \right\vert_{2}}{\vert \nabla Q \vert_{2}}
	\sqrt{ \frac{T - t}{\log \left| \log (T - t) \right|} }
	= \frac{1}{\sqrt{2 \pi}}.
	\end{align*}
\end{proof}

\subsection{Blowup rate at the ground state $Q$}  We conclude with some discussions on the wave collapse rates for \eqref{eq:nls-u}
when the initial data is near $Q$, which could be a subtle issue.  
Notice that this $Q$ is not the ground state for \eqref{eq:nls-u}, instead, it is the one for \eqref{eq:nls-phi}.
 If $\vert u_0 \vert_{2} = \vert Q \vert_{2}$, 
then the wave collapse for \eqref{eq:nls-u}  is different than in the case  $\norm{u_0}_2 >\norm{Q}_2$.  
Applying the transform \eqref{eq:phi-to-u} to the solitary wave $\vphi=e^{it}Q$ 
we can construct a blowup solution with blowup rate $(T-t)\inv$:%\edz{if $Q_b$ is a bound state rather than a grst, then $Q_b$ may not be positive or radial, we do see a role of the rotation here; however, does this contradict the result on minimal mass blowup? or we can infer that $\norm{Q_b}_2>\norm{Q}_2$? in other words, for $\norm{u_0}_2$ above Q, the blowup profile is Not characterized?}
%\footnote{---1. we can ask the question even when at the minimal mass blowup case about some profile like $Q_b$?
%2. time permitting also discuss $T_{max}>\pi/2\ga$ in \cite{Car}} 
	\begin{align}\label{eU:lens-R}
	u(t, x)
	= \frac{1}{\cos^\frac{n}{2}(2 \gamma t)}
	e^{-i \frac{\gamma}{2} |x|^2 \tan(2 \gamma t)}
	e^{i \frac{\tan(2\gamma t)}{2 \gamma}}
	Q\left( \frac{ e^{tM} x}{\cos(2\gamma t)} \right).% Q being radial
	\end{align}
 One easily checks that $u$ blows up at $T=\frac{\pi}{4 \gamma}$ satisfying
 %  Moreover	%\begin{align*}\nabla_x u(t, x)
	%& = \frac{1}{\cos^\frac{n}{2}(2 \gamma t)}
	%e^{-i \frac{\gamma}{2} |x|^2 \tan(2 \gamma t)}
	%( -i \gamma x \tan(2 \gamma t) )
	%e^{i \frac{\tan(2\gamma t)}{2 \gamma}}
	%Q\left( \frac{e^{tM} x}{\cos(2\gamma t)} \right) \\
	%& \quad + \frac{1}{\cos^{\frac{n}{2}+1}(2 \gamma t)}
	%e^{-i \frac{\gamma}{2} |x|^2 \tan(2 \gamma t)}
	%e^{i \frac{\tan(2\gamma t)}{2 \gamma}}
	%(e^{tM})^T \nabla Q\left( \frac{e^{tM} x}{\cos(2\gamma t)} \right) \\  & := I_1 + I_2.	\end{align*}
%	\begin{align*}
%	\| I_1 \|_{L_x^2} = \gamma \sin(2 \gamma t) \| x Q \|_{L^2} \qquad \text{and} \qquad
%	\| I_2 \|_{L_x^2} = \frac{1}{\cos(2 \gamma t)} \| \nabla Q \|_{L^2}.
%	\end{align*}
	\begin{align*}
	\vert \nabla u \vert_{2}
	\approx \frac{1}{ (\frac{\pi}{2} - 2\ga t)} \vert\nabla Q \vert_{2} \qquad \text{as} \ t \rightarrow T=\frac{\pi}{4 \gamma}.
	\end{align*}  %\end{remark}
Note that solutions of the form  \eqref{eU:lens-R} with such blowup time and singularity can also be obtained with  other nonpositive, non-radial bound states $Q_b$ as the profile in place of $Q$,  
where $\norm{Q_b}_2>\norm{Q}_2$ and $n\ge 2$. %\edz{yet this may occur in $n\ge 2$ only}
Suppose that $u$ is a blowup solution to \eqref{eq:nls-u} with blowup time $T^* < \frac{\pi}{4 \gamma}$ and $\vert u_0 \vert_{2} = \vert Q \vert_{2}$.
Then by means of \eqref{eq:u-to-phi} we may define a blowup solution  to \eqref{eq:nls-phi} with the same initial data which  blows up at  $T_0 = \frac{\tan(2\gamma T)}{2 \gamma}$.
Merle \cite{Mer} showed that up to the scaling and phase invariances of \eqref{eq:nls-phi},
the only minimal mass blowup solutions are of the form
	\begin{align*}
	\vphi(t,x)=\frac{1}{(T_0 - t)^\frac{n}{2}} e^{-\frac{i |x|^2}{4 (T_0 - t)}} e^\frac{i}{T_0 - t} Q\left( \frac{x}{T_0 - t} - x_0 \right)
	\end{align*}
for some $x_0 \in \mathbb{R}^n$. 
By the $\mathcal{R}$-transform \eqref{eq:phi-to-u} and the uniqueness of \eqref{eq:nls-u} (Proposition \ref{p:blup-altern-conserv}),  
we then establish a characterization for all minimal mass blowup solutions of  \eqref{eq:nls-u}. 
 \begin{proposition}\label{c:charact-mini-pc} Let $\vert u_0 \vert_{2} = \vert Q \vert_{2}$. Let 
 $u$ be a blowup solution of \eqref{eq:nls-u} on $[0,T)$ with $T:=T^* < \frac{\pi}{4 \gamma}$. Then $u$ must assume the following form
 (up to scaling and phase invariance): There exists $x_1\in\R^n$ such that %x_1=e^{-tM}x_0
\begin{equation}
\begin{aligned}\label{eU:pc-M}
u(t,x)=& \left(\frac{ 2 \gamma \cos(2\gamma T) }{\sin (2 \gamma (T - t))}  \right)^{\frac{n}{2}}
	e^{-i \frac{\gamma}{2} |x|^2 \cot(2 \gamma (T - t))}
	e^{i 2 \gamma \frac{\cos(2 \gamma T) \cos(2 \gamma t)}{\sin(2 \gamma (T - t))}}\\
	&\times Q\left( \frac{2 \gamma \cos(2 \gamma T)  e^{tM}x}{\sin(2 \gamma (T - t))} - x_1 \right).
	\end{aligned}
\end{equation}
Moreover, $\norm{\nabla u}_2\approx (T-t)\inv$ as $t\to T$. 
\end{proposition}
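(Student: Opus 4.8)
The plan is to transport the whole problem through the $\mathcal{R}$-transform to the potential-free equation \eqref{eq:nls-phi}, apply Merle's rigidity theorem there, and carry the explicit minimal mass profile back through \eqref{eq:phi-to-u}. Given a blowup solution $u\in C([0,T),\mathscr{H}^1)$ of \eqref{eq:nls-u} with $T=T^*<\frac{\pi}{4\gamma}$, Proposition \ref{p:u-phi-R_pseudo-conformal} produces $\varphi=\mathcal{R}\inv u\in C([0,T_0),\mathscr{H}^1)$ solving \eqref{eq:nls-phi}, with $T_0=\frac{\tan(2\gamma T)}{2\gamma}<\infty$ equal to its \emph{maximal} existence time, since the transform is a bijection between the maximal solutions of the two equations. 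As \eqref{eq:u-to-phi} gives $\varphi(0,\cdot)=u_0$, we have $\norm{\varphi_0}_2=\norm{Q}_2$, so $\varphi$ is a finite time, minimal mass blowup solution of \eqref{eq:nls-phi}, and Merle's characterization (recalled before the statement) then forces
\[
\varphi(t,x)=\frac{1}{(T_0-t)^{n/2}}\,e^{-\frac{i|x|^2}{4(T_0-t)}}\,e^{\frac{i}{T_0-t}}\,Q\!\left(\frac{x}{T_0-t}-x_0\right)
\]
for some $x_0\in\R^n$, up to the scaling and phase symmetries of \eqref{eq:nls-phi}. Applying $\mathcal{R}$ and invoking uniqueness for \eqref{eq:nls-u} (Proposition \ref{p:blup-altern-conserv}) then identifies $u=\mathcal{R}\varphi$ and yields \eqref{eU:pc-M}.

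The second step is a direct substitution of the above profile into \eqref{eq:phi-to-u}. Writing $s=\frac{\tan(2\gamma t)}{2\gamma}$ and $\xi=\frac{e^{tM}x}{\cos(2\gamma t)}$, I would use the two elementary identities
\[
T_0-s=\frac{\tan(2\gamma T)-\tan(2\gamma t)}{2\gamma}=\frac{\sin(2\gamma(T-t))}{2\gamma\cos(2\gamma T)\cos(2\gamma t)},\qquad |\xi|^2=\frac{|x|^2}{\cos^2(2\gamma t)},
\]
the latter because $e^{tM}\in SO(n)$ by Lemma \ref{lem:e^M}. Substituting these into $\mathcal{R}\varphi$: the amplitude prefactor collapses to $\big(\frac{2\gamma\cos(2\gamma T)}{\sin(2\gamma(T-t))}\big)^{n/2}$; the argument of $Q$ becomes $\frac{2\gamma\cos(2\gamma T)\,e^{tM}x}{\sin(2\gamma(T-t))}-x_1$ with $x_1:=x_0$; the real phase becomes $2\gamma\frac{\cos(2\gamma T)\cos(2\gamma t)}{\sin(2\gamma(T-t))}$; and the quadratic phase is $-\frac{\gamma}{2}|x|^2\big[\tan(2\gamma t)+\frac{\cos(2\gamma T)}{\cos(2\gamma t)\sin(2\gamma(T-t))}\big]$, which reduces to $-\frac{\gamma}{2}|x|^2\cot(2\gamma(T-t))$ after applying the product-to-sum formula to $\sin(2\gamma t)\sin(2\gamma(T-t))$ and then a sum-to-product identity. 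This is exactly \eqref{eU:pc-M}.

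For the blowup rate I would recycle the decomposition $\nabla_x u=I_1+I_2$ from \eqref{eq:Du-I1-I2}, which was derived without reference to the log-log rate. There $\norm{I_1}_2=\gamma\sin(2\gamma t)\,\norm{x\varphi(s,\cdot)}_2$, and by Lemma \ref{lem:virial-identity} the function $J(s):=\norm{x\varphi(s,\cdot)}_2^2=J(0)+J'(0)s+4\mathcal{E}(\varphi_0)s^2$ is a polynomial in $s$, hence bounded on the bounded interval $[0,T_0]$ (here $J(0),J'(0)$ are finite because $\varphi_0=u_0\in\mathscr{H}^1$); therefore $\norm{I_1}_2$ stays bounded as $t\to T$. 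On the other hand $\norm{I_2}_2=\frac{1}{\cos(2\gamma t)}\,\norm{\nabla\varphi(s,\cdot)}_2$, and from the explicit profile $\norm{\nabla\varphi(s,\cdot)}_2\approx(T_0-s)\inv$ as $s\to T_0$ (the gradient landing on $Q$ contributes at order $(T_0-s)\inv$, while the contribution of the quadratic phase stays bounded). Using $T_0-s\approx\frac{T-t}{\cos^2(2\gamma T)}$ and $\cos(2\gamma t)\to\cos(2\gamma T)>0$ gives $\norm{I_2}_2\approx(T-t)\inv$, so since $\norm{I_1}_2$ is bounded while $\norm{I_2}_2\to\infty$, the triangle inequality yields $\norm{\nabla u}_2\approx(T-t)\inv$.

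I do not expect a genuinely hard step here; the work is in the bookkeeping. The two points that need care are: (i) confirming that $\varphi=\mathcal{R}\inv u$ blows up exactly at $T_0$ and not earlier, which rests on the bijection between maximal solutions in Proposition \ref{p:u-phi-R_pseudo-conformal} together with $T<\frac{\pi}{4\gamma}$ keeping $T_0$ finite; and (ii) carrying out the trigonometric simplification of the quadratic phase cleanly enough to land on $\cot(2\gamma(T-t))$, which is the fiddliest computation. Everything else is either quoted (Merle's theorem, uniqueness for \eqref{eq:nls-u}) or a routine change of variables.
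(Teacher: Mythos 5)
Your proposal follows essentially the same route as the paper: transfer $u$ to a minimal mass blowup solution $\varphi=\mathcal{R}\inv u$ of \eqref{eq:nls-phi} blowing up at $T_0=\frac{\tan(2\gamma T)}{2\gamma}$, invoke Merle's rigidity theorem, and pull the explicit profile back through \eqref{eq:phi-to-u} together with uniqueness from Proposition \ref{p:blup-altern-conserv}. Your trigonometric simplification of the quadratic phase to $-\frac{\gamma}{2}|x|^2\cot(2\gamma(T-t))$ and the $I_1+I_2$ argument for the rate $\norm{\nabla u}_2\approx (T-t)\inv$ are correct and merely spell out details the paper leaves implicit.
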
   
	Note that the blowup solution given in \eqref{eU:lens-R}  is not covered by  \eqref{eU:pc-M}.
	For example, if $T^*=\pi/8\ga$, $x_1=0$, then \mbox{$u_0(x)=(2\ga)^{n/2} e^{-i\frac{\ga}{2}|x|^2} e^{i2\ga} Q(2\ga x)$.}
Rather,  \eqref{eU:lens-R} can be viewed as a bordering case 
of the assertion in Proposition \ref{c:charact-mini-pc} corresponding to $T^*=\pi/4\ga$. %making/letting and taking into account the scaling $Q\to Q_\de=\de^{-n/2}Q(\de\inv x )$ with $\de=2\ga\cos (2\ga T)$.\edz{for $T^*>\pi/4\ga$ we do not pursue a characterization but refer to \cite{Car} in the case $A=0$}\footnote{notice the scaling $Q_\lam(x)=\lam^{N/2}Q(\lam x)$ solves 
%\[ -\lam^2 Q_\lam= -\De Q_\lam-Q_\lam^{1+4/n}
%\]} 
%corresponding to T_0=\iy

\begin{remark}
If the initial value is of the form $u_0 = (1 + \varepsilon) Q$ with $0 < \varepsilon <\sqrt{1+ \frac{\alpha^*}{\| Q \|_2^2}}-1$,
then the corresponding solution to \eqref{eq:nls-u} will blowup at the rate stated in Theorem \ref{thm:log-log-u}.
Indeed,  by the range for $\varepsilon$ and 
the Pohozaev identity $\int |\nabla Q|^2 =\frac{n}{n+2} \int |Q|^{2+\frac{4}{n}} $, 
it is easy to verify  $u_0\in B_{\al^*}$ and condition \eqref{eq:negative-initial-energy}.%\edz{the limit for $\veps$ and pohozaev checked 8/11}
\end{remark}% [pohoz:  see [rotNLS.oct17.pdf];  Eq. (56) ]

\begin{remark}  %Like in \cite{CaWei} 
For large initial data one can also derive a general lower bound for the collapse rate. %using \eqref{eq:lwp-ineq}.
If the solution of \eqref{eq:nls-u} satisfies $\lim_{t \rightarrow T^*} \vert\nabla u \vert_{2} = \infty$, 
then there exists  $C=C_{p,n}> 0$ such that
	\begin{align*}
	\vert\nabla u(t) \vert_{2}
	\geq C (T^* - t)^{-(\frac{1}{p-1} - \frac{n-2}{4})}.
	\end{align*}
	 This follows from quite standard argument as in \cite{CazW90} that is used to show the l.w.p 
  and blowup alternative for  the Cauchy problem \eqref{eq:nls-u} on $[0,T^*)$.  %initial data $u(t_0)$
%By hypothesis the solution cannot be extended in $H^1$ beyond the interval $[0, T^*)$.
%From \eqref{eq:lwp-ineq}, it follows that if for some $M > c \| u(t_0) \|_{H^1}$ one has
	%\begin{align*} c \| u(t_0) \|_{H^1}+ c (T - t_0)^\delta M^p \leq M, \qquad
	%\delta = 1 - \frac{(n-2)(p-1)}{4}, \end{align*}
%then $T < T^*$. Therefore for all $M > c \| u(t_0) \|_{H^1}$,
%	\begin{align*}	c \| u(t_0) \|_{H^1} + c (T^* - t_0)^\delta M^p \geq M.	\end{align*}
%Choosing $M = 2c \| u(t_0) \|_{H^1}$ it follows that
%	\begin{align*}
%	(T^* - t_0)^\delta \| u(t_0) \|_{H^1}^{p-1}	\geq \frac{1}{(2c)^p}	:= c_0.	\end{align*}
%Since $\|u(t)\|_{L^2} = \| \varphi_0 \|_{L^2}$, we obtain
%	\begin{align*}	\| \nabla u(t_0) \|_{L^2}
	%\geq c_0 (T^* - t_0)^{-\frac{\delta}{p-1}}
	%= c_0 (T^* - t_0)^{-(\frac{1}{p-1} - \frac{n-2}{4})}. \end{align*}
In the $L^2$-critical case $p = 1 + \frac{4}{n}$,  the lower bound becomes
	\begin{align*}
	\vert \nabla u(t) \vert_{2}\geq C (T^* - t)^{-\frac{1}{2}}.
	\end{align*}
\end{remark}

% \begin{lemma}[Pohozaev for mNLS]  Let $L_A = i A \cdot \nabla$.
% If $u = Q_{A,V}$ solves
	% \begin{align}
	% a \Delta u - b(x)u + c|u|^{p-1}u + L_A u = 0, \label{Em:u-ellip_abcLA}
	% \end{align}
% then we have a general form of \eqref{e:pohoz-u^p_abc}
	% \begin{align}\label{Em:pohoz-u^pVA}
	% &a(1-\frac{2}{n} ) \int |\nabla u|^2 +\int b |u|^2 - \frac{2c}{p+1} \int |u|^{p+1}\notag\\
	% +&\frac{1}{N}\int ({\bf x}\cdot \nabla b) |u|^2+ \frac{2i}{N}\int ({\bf x}\cdot \nabla \bar{u})\,(A\cdot \nabla u)dx=0\quad checked\\
	% =&a(1-\frac{2}{n} ) \int |\nabla u|^2 +\int b(x) |u|^2 - \frac{2c}{p+1} \int |u|^{p+1}\notag\\
	% +&\frac{1}{N}\int ({\bf x}\cdot \nabla b) |u|^2-i\int \bar{u}(A\cdot \nabla u)dx=0\qquad \textcolor{blue}{checked}\label{EM:pohoz-pNabc}
	% \end{align}
% \end{lemma}

\vspace{.1520in}
\nd{\bf Acknowledgment} The authors would like to thank Remi Carles  and Chenjie Fan for 
 helpful comments and communications.

\end{document}